\newcommand{\bbR}{{\mathbb{R}}}
\newcommand{\bbC}{{\mathbb{C}}}
\newcommand{\cP}{{\mathcal{P}}}
\newcommand{\cM}{{\mathcal{M}}}
\def\bC{\overline{\mathbb{C}}}
\renewcommand{\Re}{\text{\rm Re\ }}
\renewcommand{\Im}{\text{\rm Im\ }}
\newcommand{\sgn}{\text{\rm sgn}}
\allowdisplaybreaks \numberwithin{equation}{section}
\newtheorem{theorem}{Theorem}[section]
\newtheorem{lemma}[theorem]{Lemma}
\newtheorem{proposition}[theorem]{Proposition}
\newtheorem{corollary}[theorem]{Corollary}
\theoremstyle{definition}
\date{\today}
\title
{Polynomials of the best uniform
approximation to $\sgn(x)$  on two intervals}
\author{Alexandre Eremenko\thanks{Supported by NSF grant
DMS-0555279.}$\;$
%change grant
and Peter
Yuditskii\thanks{Supported by the Austrian Science Fund FWF,
project no: P22025--N18.}}
\begin{document}
\maketitle

\begin{abstract}
We describe polynomials of the best uniform approximation
to $\sgn(x)$ on the union of two intervals $[-A,-1]\cup[1,B]$ in
terms of special conformal mappings. This permits us to
find the exact asymptotic behavior of the error of this
approximation.

MSC: 41A10, 41A25, 30C20. Keywords:
Uniform approximation, conformal mapping.
\end{abstract}

\section{Introduction}
In \cite{EY} we obtained precise asymptotics of
the error of the best polynomial approximation of
$\sgn(x)$ on two symmetric intervals $[-A,-1]\cup[1,A]$.
Paper \cite{NPVY} contains a somewhat simplified proof,
together with generalizations.
In this paper, we generalize the result to the case
of two arbitrary intervals, the problem proposed to us
by W.~Hayman and H.~Stahl, whom we thank.

Related problems on the asymptotics of the
error of the best uniform approximation
by polynomials of degree at most $n$ of the functions
$x^{n+1}$ and $1/(x-c),\; c\notin I$ on the union $I$ of two intervals
were completely
solved by N. I. Akhiezer in \cite{A1}.

Fuchs \cite{F1,F2,F3} studied general problems
of uniform polynomial approximation of piecewise
analytic functions on finite systems of intervals.
For the case of $\sgn(x)$
on two intervals $I=[-A,-1]\cup[1,B]$,
the result in \cite{F1} is
$$C_1n^{-1/2}e^{-\eta n}\leq L_n\leq C_2n^{-1/2}e^{-\eta n}.
$$
Here $$L_n=\inf_{p\in \cP_n}\sup_{x\in I}|\sgn(x)-p(x)|,$$
where $\cP_n$ is the set of polynomials of degree at most $n$;
positive constants $C_1$ and $C_2$ depend on $A,B$,
and $\eta$ is the critical value of the Green function $G$
of the region $\bC\backslash I$ with pole at infinity.
The arguments in \cite{F1} do not give optimal values of $C_1,C_2$.

When $A=B$, we have $e^{-\eta}=\sqrt{(A-1)/(A+1)}$, and the result
obtained in \cite{EY} is
\begin{equation}\label{bbb}
L_{2m+2}=L_{2m+1}\sim
\frac{\sqrt{2}(A-1)}{\sqrt{\pi A}}(2m+1)^{-1/2}
\left(\frac{A-1}{A+1}\right)^m.
\end{equation}
In this paper we will obtain a result of the same precision
for arbitrary $A$ and $B$.
In the case $A\neq B$, the ratio $\sqrt{n}e^{n\eta}L_n$ oscillates.
Similar oscillating asymptotic behavior was found by Akhiezer
for the polynomials of least deviation from $0$,
that is, for the error of the best
uniform approximation of $x^{n+1}$ by polynomials
of degree at most $n$ on two intervals.

To state our main asymptotic result we introduce certain characteristics
of the region $\bbC\backslash I$.
Let
\begin{equation*}
G(x)=G(x,\infty)= \int^x_{-1}\frac{C-x}{\sqrt{(1-x^2)(x+A)(B-x)}}dx,\quad -1<x<1,
\end{equation*}
be the Green function
of $\bC\backslash I$ with pole at infinity,
see, for example, \cite{A3},
%$$G(z)=$$
where $C\in(-1,1)$ is the unique critical point,
\begin{equation*}
C=\frac{\int_{-1}^1((1-x^2)(x+A)(B-x))^{-1/2}xdx}{
\int_{-1}^1((1-x^2)(x+A)(B-x))^{-1/2}dx}.
\end{equation*}
We introduce
positive constants $\eta=G(C,\infty)$ and
\begin{equation*}
\eta_1=-\frac 1 2 G^{\prime\prime}(C)=\frac{1}{2\sqrt{(1-C^2)(C+A)(B-C)}}.
\end{equation*}
The Green function $G(z,C)$ satisfies
$$G(z,C)=-\ln|z-C|+\eta_2+O(z-C),\quad z\to C,$$
and this relation defines the Robin constant $\eta_2.$
%For an explicit expression for $\eta_2$ in terms of theta-functions,
%see, for example,
%\cite{Tom}.

Let $\omega(x)=\omega(x,[-A,-1],\bC\backslash I)$ be the harmonic
measure of the interval $[-A,-1]$.
An explicit formula for $\omega$ is
\begin{equation}\label{harmeasure}
\omega(z)=\Im\frac{\int_{-1}^z((x^2-1)(x+A)(B-x))^{-1/2}dx}{
\int_{-1}^1((x^2-1)(x+A)(B-x))^{-1/2}dx}.
\end{equation}
In our notation related to  theta functions
we follow Akhiezer's book \cite{A3}.

\begin{theorem}
The error $L_n$ of the best polynomial approximation
of $\sgn(x)$ on $I=[-A,-1]\cup[1,B]$ satisfies
\begin{equation}\label{11a}
\displaystyle
L_n=(c+o(1))n^{-1/2}e^{-n\eta}
\left|
\frac{\vartheta_0\left(\frac{1}{2}(\{ n\omega(\infty)+\omega(C)\}
-\omega(C))|\,\tau\right)}
{\vartheta_0\left(\frac{1}{2}(\{ n\omega(\infty)+\omega(C)\}
+\omega(C))|\,\tau\right)}\right|,
\end{equation}
where
$$c=2(\pi\eta_1)^{-1/2}e^{-\eta_2},$$
\begin{equation}\label{aaa}
\displaystyle\tau=i\frac{\int_1^B((t^2-1)(B-t)(A+t))^{-1/2}dt}{
\int_{-1}^{1}((1-t^2)(B-t)(A+t))^{-1/2}dt},
\end{equation}
and
$$\vartheta_0(t|\tau)=1-2h\cos2\pi t+2h^4\cos4\pi t-2h^9\cos6\pi t+\ldots,
\quad
h=e^{\pi i\tau},$$
is the theta-function. In (\ref{11a}) we used the notation
$\{ x\}$ for the fractional part of $x$.
\end{theorem}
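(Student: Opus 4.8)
The plan is to adapt the method of \cite{EY} and \cite{NPVY} to the asymmetric case, carrying out the whole construction on the genus-one hyperelliptic Riemann surface $\mathcal X$ given by $y^2=(x^2-1)(x+A)(B-x)$, whose four branch points are the endpoints of $I$, and replacing the elementary functions that suffice when $A=B$ by theta functions attached to $\mathcal X$, with modulus $\tau$ as in \eqref{aaa}. First we would record that the best approximant $p_n$ exists, is unique, and that by the Chebyshev alternation theorem $\sgn(x)-p_n(x)$ equioscillates on $I$ between $\pm L_n$. Reading off the sign pattern of the alternation, a standard algebraic transformation (as in \cite{EY,NPVY}) recasts the extremal problem as a Chebyshev-type extremal problem for a function on $\mathcal X$: one seeks the ``function of least deviation'' having a pole of order comparable to $n$ at $\infty$ and a prescribed local behaviour at the critical point $C$ of $G(\cdot,\infty)$. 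As always for such problems the relevant exponential rate is governed by $G(\cdot,\infty)$, and --- since $G(\cdot,\infty)$ has a single critical point $C$ with saddle value $\eta$ --- it is the behaviour near $C$ that dominates. On the Jacobian $\bbC/(\bbZ+\tau\bbZ)$ the normalized Abelian integral whose imaginary part is the harmonic measure $\omega$ of \eqref{harmeasure} sends $\infty$ and $C$ to points with real coordinates $\omega(\infty)$ and $\omega(C)$, and the conjugate $\widetilde G$ of $G(\cdot,\infty)$ has periods $2\pi\omega(\infty)$ and $2\pi(1-\omega(\infty))$ about the two components of $I$.

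Next we would construct the model and deduce the upper bound. The multivalued function $n(G+i\widetilde G)$ does not exponentiate to a single-valued function on $\bC\backslash I$: its periods produce the monodromy factor $e^{2\pi i n\omega(\infty)}$. The condition that there be a genuine polynomial of degree $n$ realizing the prescribed behaviour at $\infty$ and at $C$ is a quantization condition which cannot in general be met exactly; the defect is the fractional part $\{n\omega(\infty)+\omega(C)\}$. Correcting for it with the theta function $\vartheta_0$ of $\mathcal X$ produces an explicit candidate $\widetilde p_n\in\cP_n$ whose deviation from $\sgn$ on $I$ equals the right-hand side of \eqref{11a} up to a factor $1+o(1)$; the two theta factors, with arguments $\tfrac12\bigl(\{n\omega(\infty)+\omega(C)\}\mp\omega(C)\bigr)$, arise from the images of $C$ on the two sheets of $\mathcal X$. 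This yields $L_n\le(c+o(1))\,n^{-1/2}e^{-n\eta}\,\bigl|\vartheta_0(\cdots)/\vartheta_0(\cdots)\bigr|$.

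To get the constant $c$ we would observe that, up to a factor $1+o(1)$, the deviation is the value of the model near $C$. There $G(\cdot,\infty)$ attains its maximum $\eta$ with $G''(C)=-2\eta_1<0$, so a Laplace (steepest-descent) evaluation of the integral representation of the model contributes the Gaussian factor $(\pi\eta_1)^{-1/2}n^{-1/2}$ together with the rate $e^{-n\eta}$, while the normalization of $\mathcal X$ at $C$ --- that is, the Robin constant $\eta_2$ of $G(\cdot,C)$ --- contributes $e^{-\eta_2}$; collecting the factors gives $c=2(\pi\eta_1)^{-1/2}e^{-\eta_2}$, the bounded theta quotient of the previous step carrying through unchanged.

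Finally, for the matching lower bound we would pass to the dual extremal problem: an appropriate signed measure on $I$, concentrated near the asymptotic alternation points, shows that no $p\in\cP_n$ beats $\widetilde p_n$ asymptotically; combined with the uniqueness of $p_n$ and a normal-families argument, this identifies $p_n$ with $\widetilde p_n$ up to $o(1)$ and proves \eqref{11a}. I expect the main obstacle to be the middle two steps: pinning down the theta quotient with exactly the arguments appearing in \eqref{11a}, and matching the local data at $C$ (which supplies $\eta_1$ and $\eta_2$) to the global theta normalization (which supplies the shifts by $\mp\omega(C)$). This coupling between the ``rotation by $n\omega(\infty)$'' on the Jacobian and the base point $C\neq\infty$ is exactly what makes $\sqrt n\,e^{n\eta}L_n$ oscillate when $A\neq B$, and it is precisely where the shortcuts available in the symmetric case \cite{EY} break down.
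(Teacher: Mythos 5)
Your outline is the Akhiezer--Widom route (explicit theta-function model plus a dual lower bound), which is not the route the paper takes, and as written it has gaps at exactly the load-bearing points. The first and most serious one is the opening step: you invoke ``a standard algebraic transformation (as in \cite{EY,NPVY})'' to recast the problem as a Chebyshev-type extremal problem on the curve $\mathcal X$. Both of those papers rely essentially on the symmetry $A=B$ (oddness of the extremal polynomial, reduction via $x\mapsto x^2$ to a one-interval/one-cut situation); no such transformation is available when $A\neq B$, and the whole point of the present paper is to supply a substitute. The substitute here is the exact representation $P_n=S(\Theta_n(z),a_n)$ of \eqref{twin2}, where $\Theta_n$ is a conformal map onto a comb domain $\Pi^{\pm}_{k_1,k_2}(h)$; Proposition 3.1 proves by counting alternance points that \emph{every} extremal polynomial has this form, which is what eliminates any gap between a candidate and the true extremizer. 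Your plan instead needs a separate sharp lower bound, and ``an appropriate signed measure concentrated near the asymptotic alternation points'' is not a proof: duality gives $L_n$ as a supremum over annihilating measures, but extracting the constant $c$ and the oscillating theta quotient from that supremum is at least as hard as the upper bound and is not a standard argument.

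The second gap is in the middle steps, which you yourself flag as the obstacle: the claim that the quantization defect is $\{n\omega(\infty)+\omega(C)\}$ and that correcting it with $\vartheta_0$ yields a candidate with error equal to the right-hand side of \eqref{11a} up to $1+o(1)$ is asserted, not derived. In the paper these quantities emerge from hard analysis: the position $d_n$ of the extra slit in $\Pi(n)$ gives a point $D_n$ with $\omega(D_n)=\{\alpha n+\omega(C)\}$ (equation \eqref{12c}), the limit density $\chi$ of Section 6 and Theorem \ref{th7.1} give $\lim(\theta_{n_l}-n_lg)=\tfrac12\ln\frac{\eta-g}{\eta+g}+g(\cdot,\nu)-g(\cdot,-\kappa)$, evaluating at $\nu$ produces $a_n=\eta n-G(D_n,C)-\tfrac12\ln\frac{2\eta}{\eta_1}+\eta_2+o(1)$ as in \eqref{12b}, and only then does Akhiezer's formula for the Green function of an annulus convert $e^{G(D_n,C)}$ into the theta quotient. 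Likewise the factor $n^{-1/2}$ and the $\sqrt{2/\pi}$ in $c$ do not come from a Laplace integral over $\mathcal X$ but from the one-gap asymptotics \eqref{asmain0} of $L(a)$ combined with $a_n=\eta n+O(1)$. Your heuristics point at the right constants, but none of the three steps that actually produce them (exact parametrization, limit density, evaluation at the critical point) is present in the proposal.
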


Our method is somewhat different from the methods of previous
authors. It is based on an exact representation of
the extremal polynomial
as a composition of conformal maps of explicitly described regions.
This can be considered as a development of the arguments in \cite{E,EY}.
Our representation of extremal polynomials permits to find
their asymptotic behavior in various regimes and their zero distribution.
Actually, the main asymptotic result of this paper is Theorem~7.1,
which has somewhat technical statement, and Theorem~1.1 is
a simple corollary.
For example, according to \cite{F3}, the numbers $n_1$ and $n_2$
of zeros of the extremal polynomial $P_n$ on $[-A,-1]$ and $[1,B]$,
respectively, satisfy
$$\lim_{n\to\infty}n_1/n=\omega(\infty)\quad\mbox{and}\quad
\lim_{n\to\infty}n_2/n=1-\omega(\infty),$$
while our Theorem~7.1 implies
a stronger conclusion: $n_1=n\omega(\infty)+O(1)$.

However, in this paper
we focus on the error term of the polynomial
approximation, and do not explore other corollaries
from Theorem~7.1.

A representation of extremal polynomials is described in sections 2, 3,
where we use an entire function introduced in \cite{EY}.
In Section 4 we find an integral representation of the principal
conformal map
involved, and then study its asymptotics in sections 5-7.
We derive (\ref{bbb}) as a special case
of Theorem~1.1 in Section~8.
Finally, in Section~9, we sketch without proof the limit case $B=\infty$.
In this case, instead of approximation by polynomials
one has to consider approximation by entire functions of order $1/2$,
normal type.

\section{Preliminaries}
We begin by recalling the construction of the entire
function $\tilde{S}(z,a)$ of exponential
type one which gives the best
uniform approximation of $\sgn(x)$ on the set
$(-\infty,-a]\cup[a,\infty)$, where $a>0$.
There is a unique such function for every $a>0$;
it is odd and satisfies
\begin{equation}\label{twin1}
\tilde S(a,a)=1-L(a),\ \tilde S(\tilde c_k,a)=1-(-1)^k L(a),
\end{equation}
where $L(a)$ is the approximation error, and
$\tilde{c}_1<\tilde{c}_2<\ldots$ the sequence
of positive critical points.
The graph of
this function is shown in Fig.~1.
\begin{figure}
\begin{center} \includegraphics[scale=0.8]
{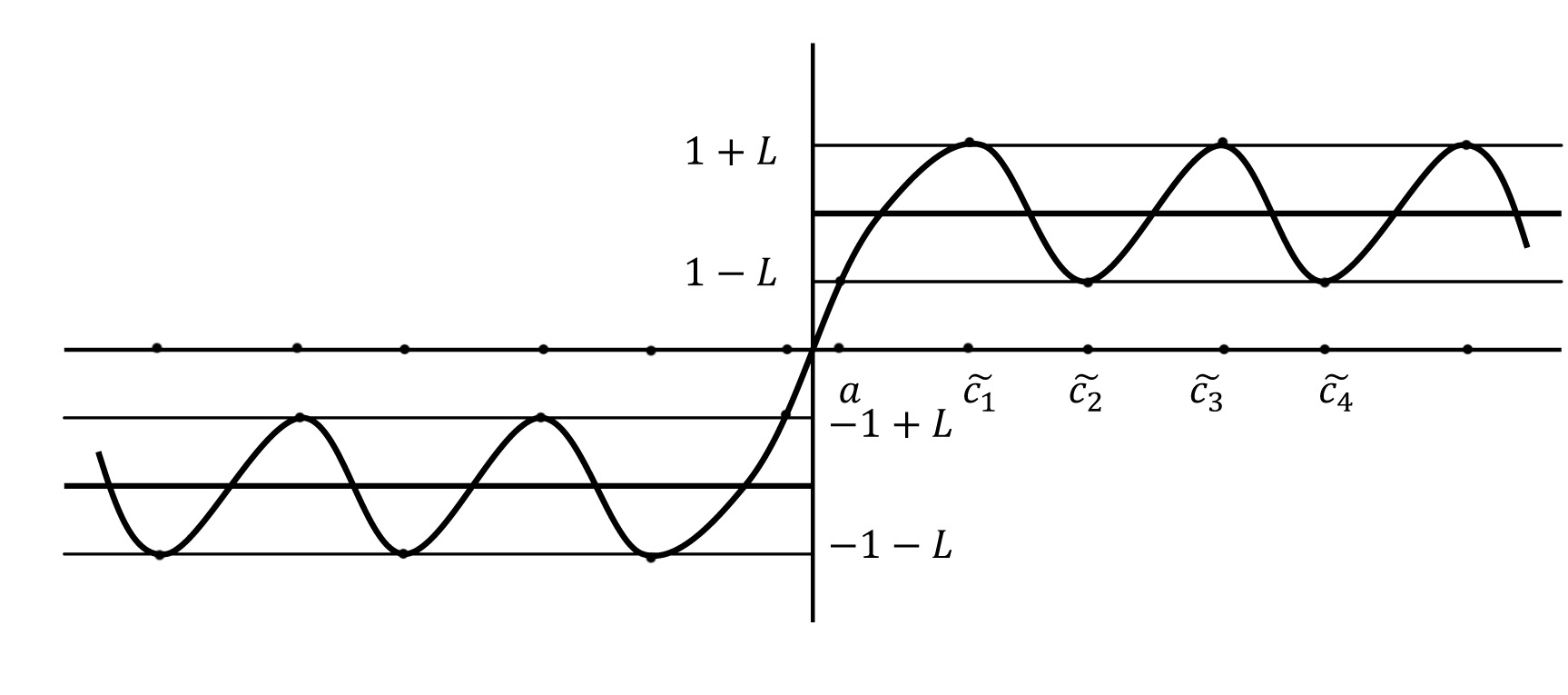}\end{center}
\caption{Graph of the function $\tilde S(z,a)$ on the real axis}\label{fig1}
\end{figure}
We define the positive number $b=b(a)$ by $\cosh b=1/L(a)$.
It is easy to see that $b$ is a continuous increasing
function of $a$, and the correspondence
$a\mapsto b$ is a homeomorphism of the positive ray
onto itself.

For every $b>0$, we consider the region
$$\Omega=\{ x+iy: x>0,y>0,x>\arccos(\cosh b/\cosh y)\;
\mbox{for}\; y>b\}.$$
This region is shown in Fig.~2; it consists
of the points in the first quadrant to the right
of the curve
$$\gamma_b:=\{\arccos(\cosh b/\cosh t)+it:b\leq t<\infty\}.$$
Let $\tilde{\psi}$ be the conformal map of the first
quadrant $\bbC_{++}$ onto $\Omega$, normalized by
\begin{equation}\label{twoinf3}
\tilde\psi(z)=z+\dots,\ z\to \infty\ \text{and}\ \tilde\psi(0)=b.
\end{equation}
Put $a=\tilde{\psi}^{-1}(0)$.
\begin{figure}
\begin{center} \includegraphics[scale=0.6]
{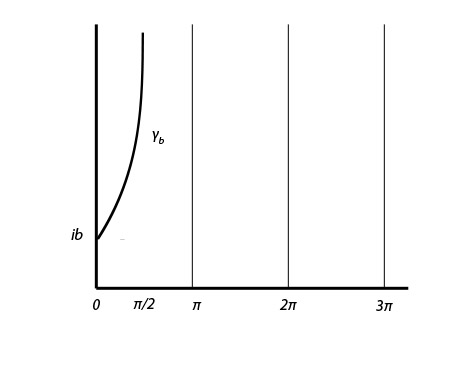}\end{center}
\caption{Domain $\Omega$ such that $\tilde S(z)=1-L(a)\cos\tilde \psi(z)$,
$\tilde\psi:\bbC_{++}\to \Omega$.}\label{fig2}
\end{figure}

In \cite{EY} we proved that
\begin{equation}\label{raz}
\tilde S(z,a)=1-L(a)\cos\tilde\psi(z),\quad
z\in\bbC_{++}:=\{ z: \Re z>0,\; \Im z>0\}.
\end{equation}
As the right hand side of (\ref{raz}) takes real values on the positive
ray and imaginary values on the positive imaginary ray, $\tilde{S}$ extends
to the whole plane as an odd entire function.

The following asymptotics hold
\begin{equation}\label{asmain0}
\lim_{a\to\infty}\sqrt{a} e^a L(a)=\sqrt{\frac 2 \pi}.
\end{equation}

Notice that
all  critical points $\{\pm \tilde c_k\}$ of
$\tilde S(z,a)$ are real, and $\tilde\psi(\tilde c_k)=\pi k$.

It is convenient to modify a little this
conformal mapping. We write
$S(z,a):=\tilde S(\sqrt{z^2+a^2},a)$,
where $z$ belongs to the upper half-plane $\bbC_+$ with
the slit $\{it: 0<t\le a\}$, see Fig.~\ref{fig3}. Function $S$ is
not entire, it is only defined in the upper half-plane.
\begin{figure}
\begin{center}
\includegraphics[scale=0.8]
{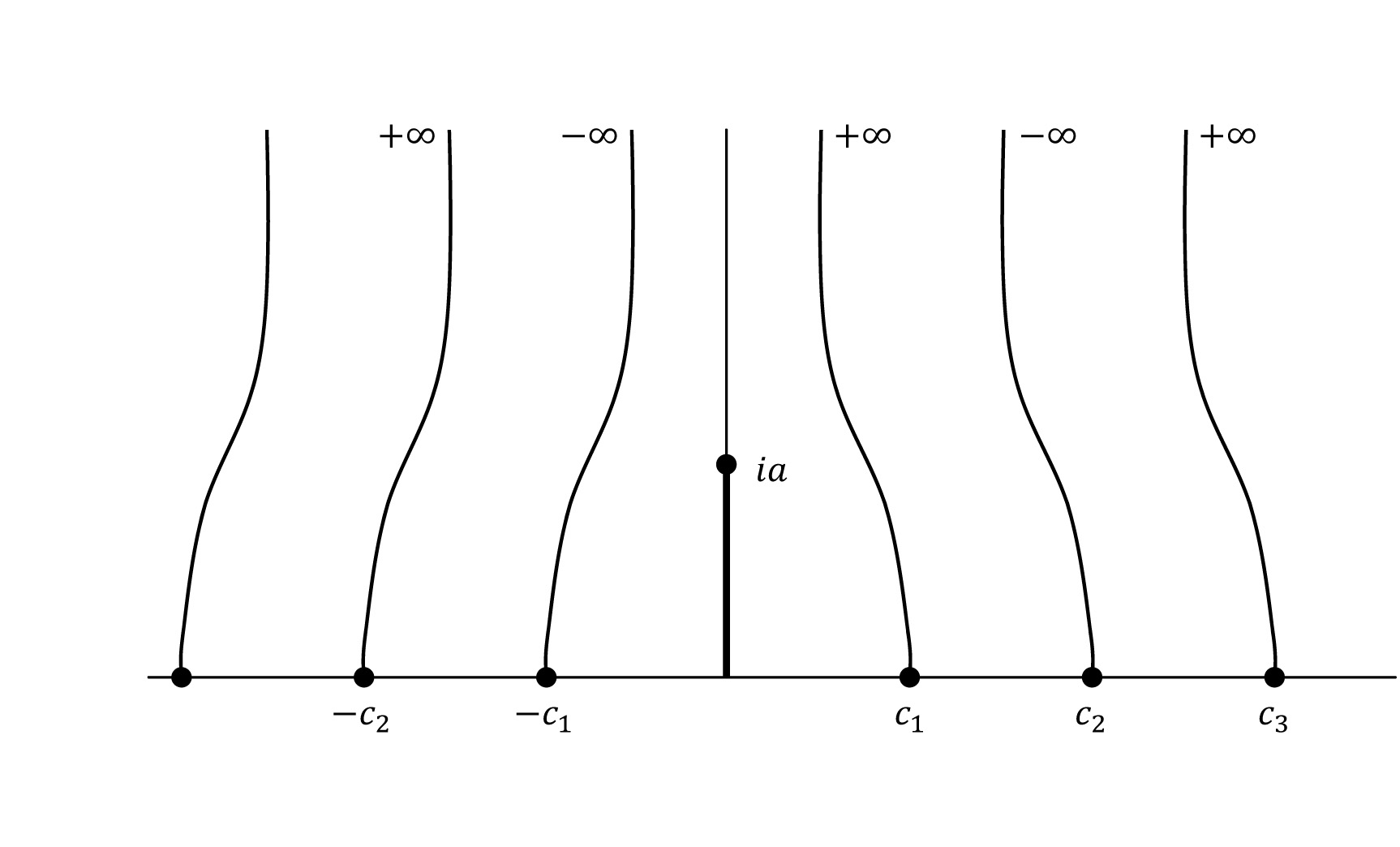}
\end{center}
\caption{Preimage $S^{-1}(\bbR,a)$  in the upper half-plane}\label{fig3}
\end{figure}

Again we have the conformal mapping
$\psi:\bbC_{++}\to\Omega$
but in contrast with \eqref{twoinf3}
\begin{equation}\label{twoinf5}
\psi(z)=z+\dots, \ z\to \infty,\ \ \psi(0)=0,
\end{equation}
and therefore $\psi(ia)=ib$.
The full preimage of the real
axis under $S$ in the upper half-plane consists
of the curves
\begin{equation}\label{ddd}
\delta_k=\psi^{-1}(\{\pi k+it:t>0\}),\quad k=\pm1,\pm2,\ldots .
\end{equation}
shown in Fig.~3.
These curves have vertical asymptotes $\{\Re z=\pi k-\pi/2\}.$

Now let $P_n(z)$ be the best approximation of
$\sgn(x)$ by polynomials
of degree at most
$n$ on two intervals $I=I_-\cup I_+$, $I_\pm\subset \bbR_\pm$.
Using a linear transformation we may always assume that
$I=[-A,-1]\cup[1,B]$.

Our goal is to obtain
a representation for the extremal polynomial
in the form of the composition
\begin{equation}\label{twin2}
P_n(z)=S(\Theta_n(z),a_n)
\end{equation}
where $\Theta_n$ is the conformal mapping\footnote{In what
follows, the letters $\Theta$ and $\theta$ are used
to denote conformal maps which have no relation
to theta-functions $\vartheta$.} of the upper
half-plane on a suitable ``curved" comb-like region,
and $a_n$ is an
appropriate value of the parameter~$a$.

First we give typical examples
of the representation \eqref{twin2}
and then show that these examples
exhaust all possibilities.
\vspace{.1in}

\noindent
{\em First example}.
For $n=4$, consider the following region
$\Pi_{2,3}$, see Fig. \ref{fig4}. Its boundary consists
of the vertical segment $[0,ia]$, the horizontal
segment $[-c_2,c_3]$ and the curves $\delta_{-2},\delta_3$.
\begin{figure}
\begin{center}
\includegraphics[scale=0.6]
{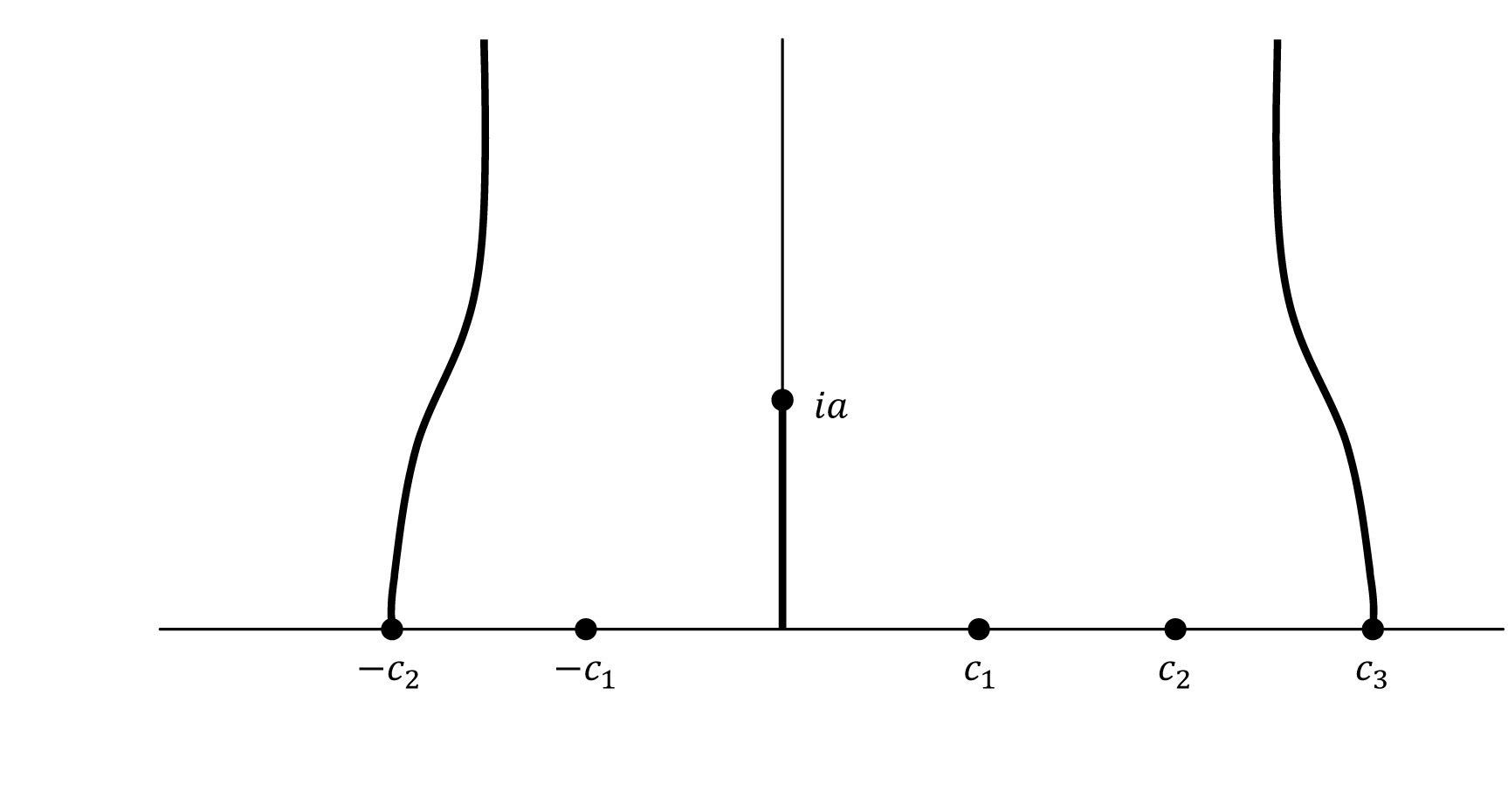}
\end{center}
\caption{Domain $\Pi_{2,3}$}\label{fig4}
\end{figure}

Let $\Theta(z)=\Theta_4(z)$ be the conformal mapping
\begin{equation}\label{twin3a}
\Theta: \bbC_+=\{z\in\bbC,\
\Im z>0\}\to \Pi_{2,3},\quad \Theta(\pm 1)= 0,\ \Theta(\infty)=\infty.
\end{equation}
The function $P(z)=S(\Theta(z),a)$ can be extended to the lower
half-plane due to the symmetry principle.
Therefore it is an entire function, which is, in fact,
a polynomial of degree $4$ due to its asymptotics at infinity.
The graph of this polynomial on the real axis is of the form given
in Fig. \ref{fig5},
\begin{figure}
\begin{center}
\includegraphics[scale=0.8]
{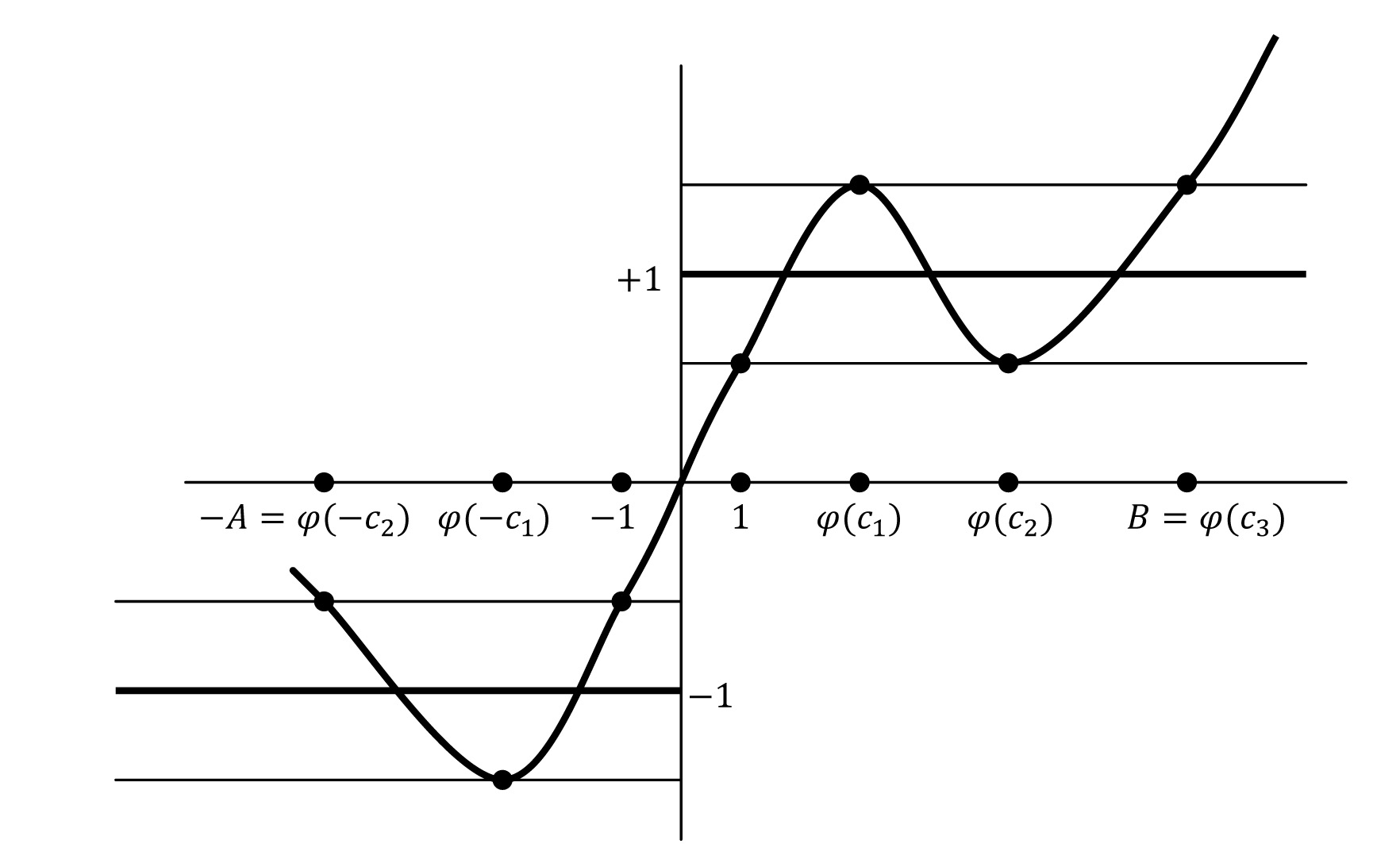}
\end{center}
\caption{Extremal polynomial corresponding to the region
$\Pi_{2,3}$}\label{fig5}
\end{figure}
where $\phi=\Theta^{-1}$. By the Chebyshev theorem
(for the two interval version of this theorem see \cite{A2}, \cite{A1}
\cite{A3}),
$P(z)$ is the extremal polynomial on the set $I$ with $A=-\phi(-c_2),$ and
$B=\phi(c_3)$.
\vspace{.1in}

\noindent
{\em Second example}. Let us point out that the
above polynomial $P(z)$ has $7$ points of alternance,
instead of $6$, which are required by the Chebyshev theorem for a polynomial of degree $4$. Therefore the same polynomial is extremal on the sets of two kinds
\begin{equation}\label{twin4}
I=[-A,-1]\cup[1,\phi(c_3)],\quad \phi(-c_2)<-A\le\phi(-c_1)
\end{equation}
or
\begin{equation}\label{twin5}
I=[\phi(-c_2),-1]\cup[1,B],\quad\phi(c_2)\le B\le\phi(c_3).
\end{equation}
\vspace{.1in}

\noindent
{\em Third example}. From the position
$I=[\phi(-c_2),-1]\cup[1,\phi(c_2)]$
we can start a deformation of the set $I$ and of the extremal polynomial.
Namely consider  the region $\Pi^+_{2,3}(h)$, see Fig. \ref{fig6}.
\begin{figure}
\begin{center}
\includegraphics[scale=0.8]
{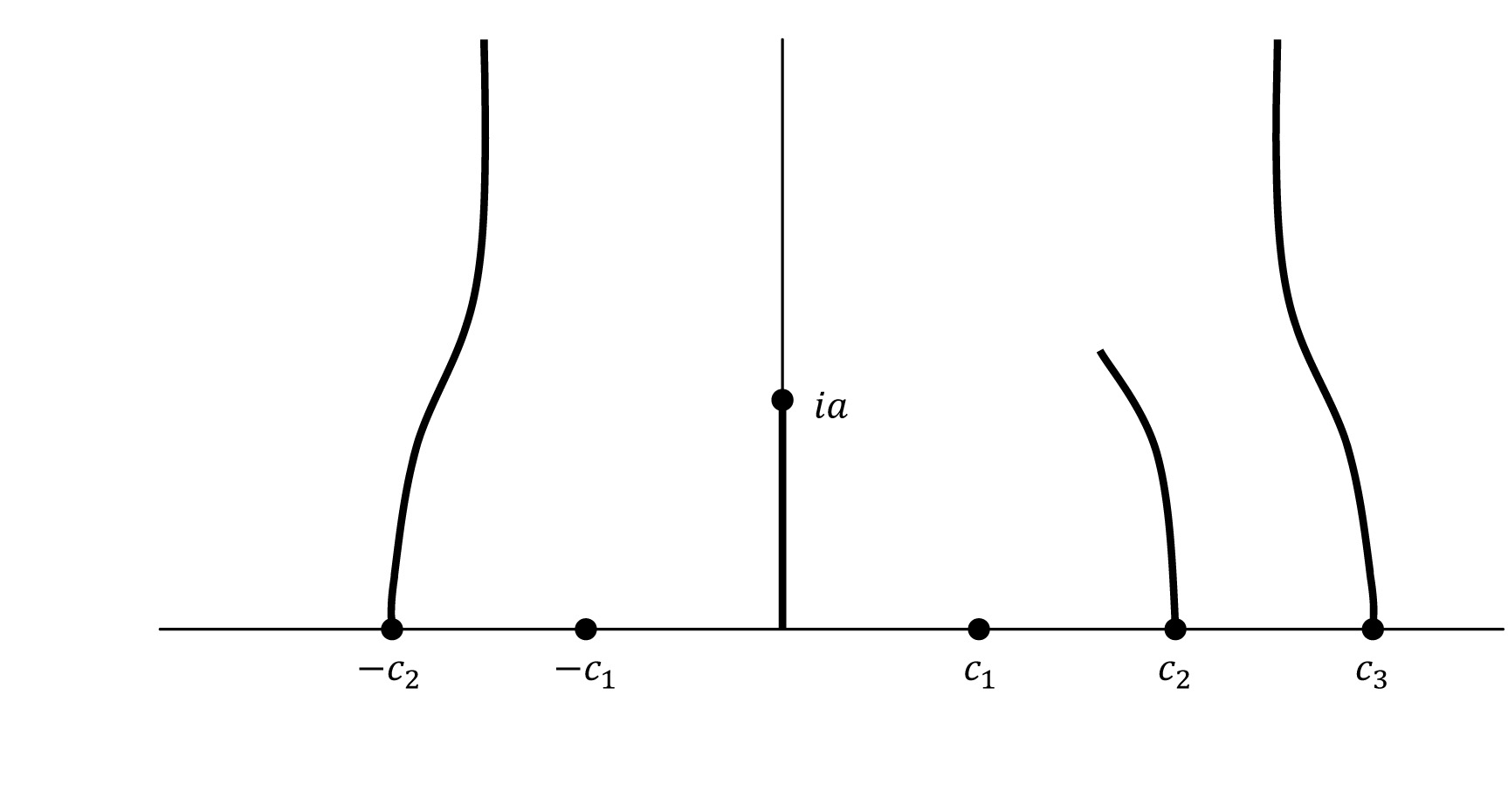}
\end{center}
\caption{Domain $\Pi^+_{2,3}(h)$}\label{fig6}
\end{figure}
Here we added to the boundary a segment of the curve $\delta_2$ that
starts at the critical point $c_2$ and has length
$h\in(0,\infty)$. In  this case
\begin{equation}\label{twin3}
\Theta: \bbC_+=\{z\in\bbC,\
\Im z>0\}\to \Pi^+_{2,3}(h),
\quad \Theta(\pm 1)=0,\ \Theta(\infty)=\infty,
\end{equation}
and $P(z):=S(\Theta(z),a)$ is of the form given in Fig. \ref{fig7}.
\begin{figure}
\begin{center}
\includegraphics[scale=0.8]
{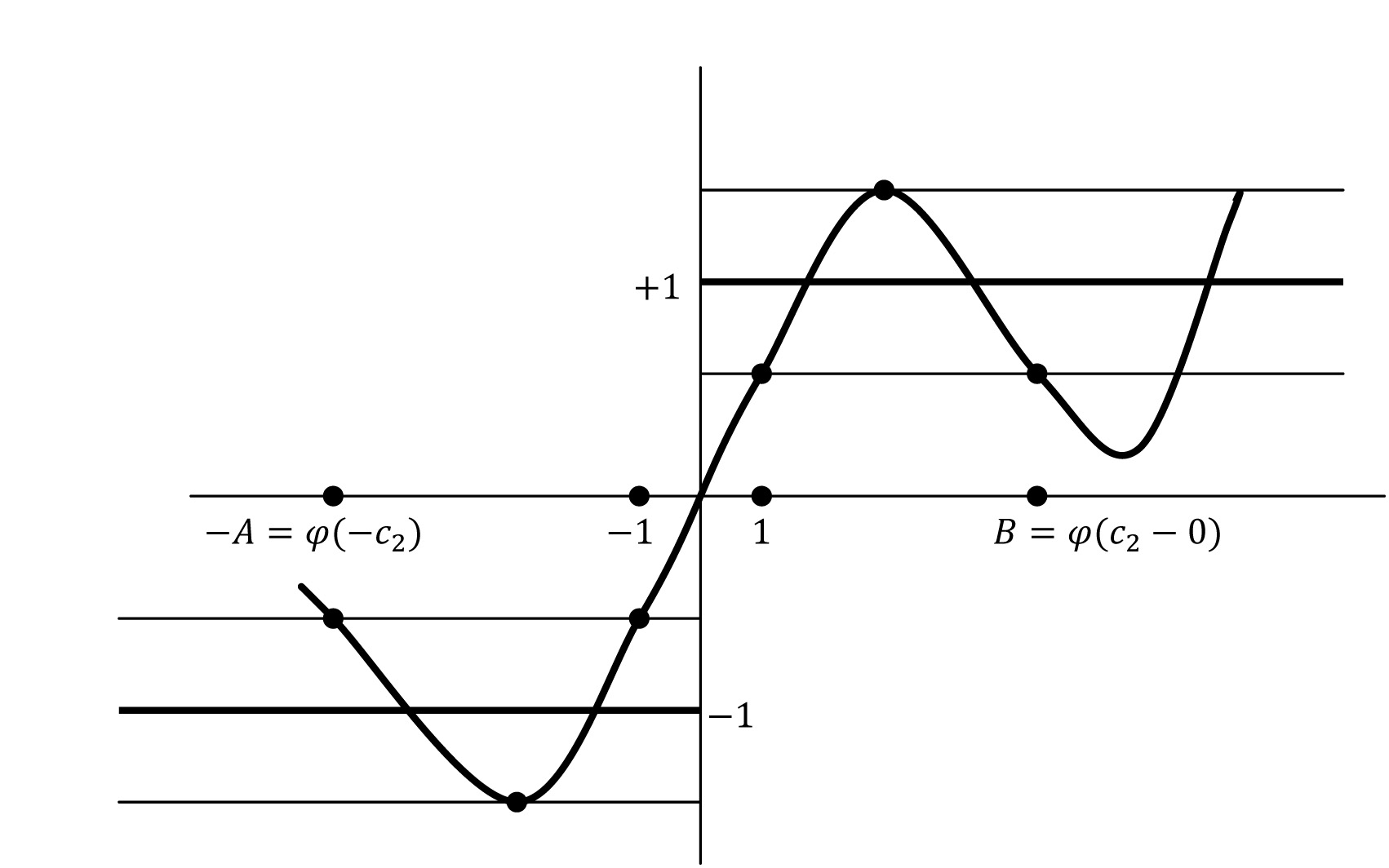}
\end{center}
\caption{ ...and the corresponding extremal polynomial}\label{fig7}
\end{figure}
For this new family of regions, which we
parametrized by positive $h$, the polynomial is extremal on the set
$I=[\phi(-c_2),-1]\cup[1,\phi(c_2-0)]$.

In the next section we show that these examples exhaust
all possibilities for the extremal polynomials.

\section{Parametrization}

We begin with a general description of
extremal polynomials.
Fix $a>0$. This defines the number $L=L(a)$ and the function $S(\cdot,a)$.
Let $k_1$ and $k_2$ be two positive integers.
Consider the region $\Pi_{k_1,k_2}$ in the upper half-plane
bounded by the curves $\delta_{-k_1}$ and $\delta_{k_2}$.
Then for $h\geq 0$ and $k_2\geq 2$, we define the region
$\Pi_{k_1,k_2}^+(h)$ by making in $\Pi_{k_1,k_2}$ a slit
along $\delta_{k_2-1}$ starting from $c_{k_2-1}$
and such that the length of its image
under $\psi$ is $h$. So $\Pi^\pm_{k_1,k_2}(0)=\Pi_{k_1,k_2}$.
Similarly we define
$\Pi_{k_1,k_2}^-(h)$ for $k_1\geq 2$ by making
a slit along $\delta_{-k_1+1}$.

Let $\Theta$ be the conformal map of the upper half-plane
onto $\Pi_{k_1,k_2}^+(h)$, normalized
by $\Theta(\pm1)=0,\;\Theta(\infty)=\infty$.
Consider the function
\begin{equation}\label{Aaa}
P(z)=S(\Theta(z),a).
\end{equation}
By construction, it is real on the real line, so the symmetry
principle implies that $P$ extends to an entire function.
By looking at the asymptotic behavior as $z\to\infty$
we conclude that $P$ is a polynomial of degree $k_1+k_2-1$.
All critical points of this polynomial are real.
If $h=0$, then all critical values are $-1\pm L$ and
$1\pm L$ on the negative and positive rays respectively.
If $h>0$, the extreme left critical value
is changed to $-1\pm L\cosh h$, or the extreme right
critical value is changed to $1\pm L\cosh h$.

We have seen in the previous section that each of these
polynomials $P$ is the extremal polynomial for some
$A$ and $B$. Now we prove that for every $A$ and $B$
one of these polynomials is extremal.

\begin{proposition} All extremal polynomials
are of the form (\ref{Aaa}) with $\Theta$ as defined above and
some $k_1, k_2, a$ and $h$.
\end{proposition}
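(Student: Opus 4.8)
The plan is to argue from the Chebyshev alternation characterization and count degrees of freedom. Given $A,B$, let $P_n$ be the (unique) extremal polynomial for $\sgn$ on $I=[-A,-1]\cup[1,B]$, with error $L_n$. By the two-interval Chebyshev theorem cited in the excerpt, $\sgn(x)-P_n(x)$ equi-oscillates: there is a maximal alternation set, and on each interval $P_n$ oscillates between the levels $\mp1\pm L_n$, touching the extreme levels $\pm(1+L_n)$ at the endpoints $\{-A,-1\}$ and $\{1,B\}$ only if the geometry forces it. First I would set $L=L_n$, let $a=a(L)$ be determined by $\cosh b=1/L$ and the homeomorphism $a\mapsto b$ from Section~2, so that $S(\cdot,a)$ is fixed with critical values exactly $\pm1\pm L$. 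The key observation is that $w=P_n(z)$ and $\zeta=S(\zeta,a)$ have \emph{the same} critical-value structure on $\bbR$ away from the possibly-modified extreme critical values, so the ``comb'' preimage $\Theta:=S^{-1}\circ P_n$ should be single-valued and conformal from $\bbC_+$ onto one of the regions $\Pi^\pm_{k_1,k_2}(h)$.

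The core steps, in order: (1) Show $\Theta_n:=S(\cdot,a)^{-1}\circ P_n$, initially defined via analytic continuation near a regular point of $I$, extends to a well-defined holomorphic map of $\bbC_+$. For this I would use that $P_n:\bbC_+\to\bbC$ and $S(\cdot,a):\bbC_{++}\cup(\text{slit region})\to\bbC$ are both branched covers whose branch data over $\bbR$ match: $P_n$ has only real critical points (this needs proof — see below), with the same local picture $(-1)^k$ alternation, and $S$ realizes exactly this picture. So the monodromy of $S^{-1}\circ P_n$ around any critical point is trivial, giving a single-valued $\Theta_n$. (2) Identify the image: $\Theta_n(\bbC_+)$ is a domain in the upper half-plane bounded by arcs of the curves $\delta_k$ (preimages under $S$ of the vertical rays $\{\pi k+it\}$), because $P_n^{-1}(\bbR)\cap\bbC_+$ maps to $S^{-1}(\bbR)\cap\bbC_{++}$; matching the number of alternation points on $I_-$ and $I_+$ pins down the indices $k_1,k_2$; and if an extreme critical value of $P_n$ lies strictly between $\pm1$ and $\pm(1+L)$ — which happens precisely when one endpoint of $I$ is not an alternation point at the extreme level — one gets a slit of finite positive $\psi$-length $h$ along $\delta_{k_2-1}$ or $\delta_{-k_1+1}$, i.e. the region is $\Pi^+_{k_1,k_2}(h)$ or $\Pi^-_{k_1,k_2}(h)$. (3) Normalization: $\Theta_n(\pm1)=0$ follows because $P_n(\pm1)=\pm(1+L)$ are the extreme values attained at the inner endpoints $\mp1$, corresponding under $S$ to the point $0$ (where $\tilde S(a,a)=1-L$, i.e. $\cos\tilde\psi=1$ after the shift — one checks the base point maps to $0$); and $\Theta_n(\infty)=\infty$ from the polynomial growth of $P_n$ matching the linear growth normalization $\psi(z)=z+\cdots$.

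The main obstacle I expect is step~(1), specifically proving \emph{a priori} that all critical points of the extremal polynomial $P_n$ are real and that $P_n-\sgn$ has no ``extra'' behavior (e.g. no complex critical points, no critical values outside $[-1-L,-1+L]\cup[1-L,1+L]$ on the real axis except the single allowed extreme modification). One route: from the equi-oscillation, $(P_n^2-1)' / $ (something) has all its real zeros accounted for by the alternation points, a count via the degree shows there is no room for non-real critical points; alternatively one invokes the known structure of Chebyshev-like extremal polynomials on two intervals (Akhiezer), or argues directly that $\Theta_n$ as a multi-valued function has image in $\bbC_{++}$ by a maximum-principle / boundary-correspondence argument, forcing the branch points to be real. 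A second, more bookkeeping obstacle is (3), matching normalizations and the precise count relating the number of alternation points to $k_1+k_2-1=n=\deg P_n$ and to whether the region carries a slit; here one must carefully treat the borderline cases where an endpoint of $I$ coincides with a critical point of $S\circ\Theta_n$ (this is exactly the $h=0$ versus $h>0$ dichotomy and the ``7 points of alternance'' phenomenon from the Second example). Once $\Theta_n$ is known to be a conformal bijection onto such a region with the stated normalization, uniqueness of the conformal map and uniqueness of the extremal polynomial close the argument.
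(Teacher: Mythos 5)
Your overall strategy coincides with the paper's: reduce the problem to showing that the extremal polynomial $P_n$ has the same critical point/critical value data as some $S(\Theta,a)$, and then conclude by a rigidity statement. However, the step you yourself flag as ``the main obstacle'' --- proving \emph{a priori} that all critical points of $P_n$ are real and simple, that all critical values except at most one equal $-1\pm L$ on the negative ray and $1\pm L$ on the positive ray, and that the one possible exception sits at an extreme critical point outside $[-A,B]$ with $|P(c)\mp1|>L$ --- is exactly the content of the paper's proof, and you leave it unexecuted, offering only three alternative gestures (``a count via the degree\dots, alternatively one invokes Akhiezer\dots, or argues directly''). This is a genuine gap: without this step nothing guarantees that $S^{-1}\circ P_n$ is single-valued or that its image is one of the regions $\Pi^{\pm}_{k_1,k_2}(h)$, so the whole construction of $\Theta_n$ is conditional.

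The paper closes the gap by an elementary count. By the alternance theorem there are $m\ge n+2$ alternance points; every alternance point interior to $(-A,-1)\cup(1,B)$ is critical, so the number $N$ of non-critical alternance points is at most $4$ (only the endpoints $-A,-1,1,B$ qualify), while the number $K$ of critical alternance points is at most $n-1$. Hence $n+2\le K+N\le n+3$, which leaves exactly three cases: (a) $K=n-1$, $N=4$; (b) $K=n-1$, $N=3$; (c) $K=n-2$, $N=4$. In cases (a),(b) all $n-1$ critical points are real, simple and are alternance points; in case (c) the single remaining critical point is forced to be real because non-real critical points of a real polynomial come in conjugate pairs, and a further argument (using that all four endpoints are alternance points and that $P'$ changes sign once between consecutive alternance points) places it in $\bbR\setminus[-A,B]$ and determines the sign of the deviation of its critical value. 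Case (b) also requires showing that $\pm1$ are always alternance points, which the paper does by a monotonicity argument on $(-1,1)$; note in this connection that your normalization claim ``$P_n(\pm1)=\pm(1+L)$'' has the wrong sign --- the inner endpoints carry the values $\pm(1-L)$, matching $S(0^{\pm},a)=\pm(1-L(a))$. Finally, instead of your monodromy/covering argument the paper invokes the MacLane--Vinberg rigidity fact (two real polynomials with real simple critical points and identical critical value sequences agree up to a real affine change of variable), which sidesteps the single-valuedness issue entirely; it also treats the degenerate case $B=1$ or $A=1$ separately via Chebyshev polynomials, a case your outline does not address.
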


We give an elementary proof of this proposition,
which is based on counting critical points and alternance points.
This proof does not extend to the case of entire functions,
so in Section~9 we will give another proof which is less elementary
but avoids counting.

\begin{proof}
In the proof we will use the following fact which is well-known
and easy to prove.
\vspace{.1in}

{\em Let $P_1$ and $P_2$ be two real polynomials
with all critical points real and simple, and suppose that their critical
points are listed in increasing order as
$c_1<c_2<\ldots<c_{n-1}$
and $c_1^\prime< c_2^\prime<\ldots
<c_{n-1}^\prime.$
If $P_1(c_j)=P_2(c_j^\prime)$ for $1\leq j\leq n-1$, then
$P_1(z)=P_2(cz+b)$ for some $c>0$ and real $b$.}
\vspace{.1in}

For a discussion and generalizations of this fact to entire functions, see
\cite{mclane}, \cite{vinberg}.

Let $A>1, B>1$, and a positive integer $n$ be given.
(We will deal with the degenerate case $A=1$ or $B=1$
later).
Let $P$ be the extremal polynomial of degree $n$ which exists
and is unique by Chebyshev's theorem. According to
Chebyshev's ``alternance theorem'', this polynomial $P$
is characterized by the following properties: let $Q(x)=P(x)-\sgn(x)$,
then
\begin{equation}
\label{A0}
|Q(x)|\leq L,\quad x\in[-A,-1]\cup[1,B],
\end{equation}
and there exist
\begin{equation}
\label{A1}
m\geq n+2
\end{equation}
points $x_1<x_2<\ldots<x_m$ in $[-A,-1]\cup[1,B]$
such that
\begin{equation}\label{A2}
|Q(x_j)|=L,\quad 1\leq j\leq m,\quad\mbox{and}\quad
Q(x_j)Q(x_{j+1})<0,\quad
1\leq j\leq m-1.
\end{equation}
These points $x_j$ are called the alternance points.
Evidently, all alternance points in $(-A,-1)\cup(1,B)$
are critical, that is, $P'(x)=0$ at all such points.
Let $K$ be the number of critical alternance points
and $N$ the number of non-critical alternance points.
We have the evident inequalities
$$K\leq n-1\quad\mbox{and}\quad N\leq 4.$$
Combined with (\ref{A1}) this gives
$$n+2\leq m=K+N\leq n+3.$$
So we have three possibilities:

a) $m=n+3,\; N=4,\; K=n-1$. The last two equalities imply
that all critical points of $P$ are real and simple,
and each of them is an alternance point which belongs
to $(-A,-1)\cup(1,B)$.
All $4$ points $-A,-1,1,B$ are non-critical
alternance points.
So the graph has the shape shown in Fig. 5.

b) $m=n+2,\; N=3, \; K=n-1$.
Again all critical points are real, simple, belong to
$[-A,-1]\cup[1,B]$, and each critical point is
an alternance point. All endpoints $-A,-1,1,B$ except
possibly one are alternance points.
Let us show that $-1$ and $1$ {\em are} alternance points.

Proving this by contradiction, suppose,
for example that $-1$ is not an alternance point.
Then $1$ cannot be a critical point
because $N=3$.
Thus $P'(x)\neq 0$ on $[-1,1]$ and $P(1)\geq 1-L>-1+L
\geq P(-1)$,
we conclude that $P$ is strictly increasing on
an interval $(-1-\epsilon,1+\epsilon)$ for some
$\epsilon>0$. This implies that $1$
is also not an alternance
point, a contradiction.

Thus the polynomial $P$ is of the type described in
(\ref{twin4}), (\ref{twin5}).

c)  $m=n+2,\; K=n-2,\; N=4$. In this case we have exactly
one simple critical point $z$ which is not
an alternance point.
Evidently this exceptional critical
point is real. We claim that it belongs to
$\bbR\backslash[-A,B]$.

First of all, $z\notin\{-A,-1,1,B\}$ because $N=4$ so
all these $4$ points are non-critical.
Second, $z$ cannot be in the interior of
one of the intervals $(-A,-1)$ or $(1,B)$. Indeed,
if it is in the interior of one of these intervals,
consider the adjacent alternance
points $x_j$ and $x_{j+1}$ on the same interval
such that $x_{j}<z<x_{j+1}$.
Such $x_j$ and $x_{j+1}$ exist
because all endpoints of each interval are alternance points,
and $z$ is not an alternance point.
As $z$ is the unique
critical point on $(x_j,x_{j+1})$, we obtain a contradiction
with the alternance condition (\ref{A2}).
Finally we prove that $z\notin (-1,1)$,
Proving this by contradiction, suppose that $z\in(-1,1)$.
As $-1$ is an alternance point we have $P(-1)=-1\pm L$.
Suppose first that
\begin{equation}
\label{A5}
P(-1)=-1-L.
\end{equation}
 Then $P'(-1)< 0$ because
$-1$ is not critical ($N=4$ in the case we consider now),
and
(\ref{A0}) implies that $P'(-1)\leq 0$.
As $P'$ changes sign exactly once on $(-1,1)$ and the
point $1$ is also non-critical, we conclude that
$P'(1)>0$. As $P(1)=1\pm L$, (\ref{A0}) implies
$P(1)=1-L$. This equality and (\ref{A5}) contradict
the alternance condition (\ref{A2}).
The case $P(-1)=-1+L$ is considered similarly.

Let $c$ be the critical point which is outside $[A,B]$.
It is easy to see that $|P(c)+1|>L$ if $c<0$ and $|P(c)-1|>L$
if $c>0$. This is because $N=4$ and thus $A$ and $B$
are non-critical alternance points.

So in the case c) we have the graph of the type shown
in Fig 7.

To summarize, we proved that in all cases the critical
points are real and simple, all critical values, with at most
one exception are $-1\pm L$ on the negative ray and
$1\pm L$ on the positive ray, and the exceptional critical
value, if it exists, corresponds to an extreme (left or right)
critical point. If the exceptional critical point $c$
is positive then $|P(c)-1|>L$ and if $c$ is negative then
$|P(c)+1|>L$.

Polynomials $S(\Theta,a)$ constructed above permit to match
any such critical value pattern, so we conclude that
$P(z)=S(\Theta(cz+b),a)$ with $c>0$ and $b\in\bbR$.
Finally, the points $-1,1$ are always non-critical alternance points,
and this implies that $c=1$ and $b=0$.

It remains to consider the degenerate case.
Suppose, for example that $B=1$.
Then only $3$ alternance
points can be non-critical, so we are in the case b).
The extremal polynomial in this degenerate case can be
easily written explicitly:
$$P_n(x)=L_nT_n\left(\frac{2x+A+1}{A-1}\right)-1,$$
where $T_n(x)=\cos n\arccos x$, and
$$L_n=\frac{2}{T_n(1+4/(A-1))+1}\sim
4\exp\left(- n\ch^{-1}\left(1+\frac{4}{A-1}\right)\right)$$
is the approximation error.
\end{proof}
\medskip

\noindent \textbf{Remarks}.
It is easy to see that our polynomials depend continuously
on $h$.
When $h\to\infty$, we have $\Pi_{k_1,k_2}^+(h)\to\Pi_{k_1,k_2-1}$
and $\Pi_{k_1,k_2}^-(h)\to\Pi_{k_1-1,k_2}$ in the sense of
Caratheodory, and the corresponding polynomials converge
uniformly on compact subsets of the plane.

Let us show that $A(h)$ and $B(h)$
depend monotonically on $h$.
Let $h>h_1$, $\Theta(z)=\Theta(z,h)$
and $\Theta_1(z)=\Theta(z,h_1).$ Then the function
$w(z)=\Theta_1^{-1}\circ\Theta(z)$
maps the upper half-plane into itself,
and has the properties: $w(\pm1)=\pm1, w(\infty)=\infty.$
Thus it has a
representation
\begin{equation*}
    w(z)=\rho_\infty(z-1)+1+\int\frac{z-1}{(x-1)(x-z)}d\rho(x)
\end{equation*}
where $d\rho$ is positive and supported
on a compact set $I$ such that $x>B$ for all $x\in I$, and $\rho_\infty>0$.
Here we used $w(1)=1$ and $w(\infty)=\infty$.
Now we use the condition $w(-1)=-1$. We obtain
\begin{equation}\label{1}
\rho_\infty=1-\int\frac{1}{(x-1)(x+1)}d\rho(x)
\end{equation}

Since $w(B)=B_1$ we have
\begin{equation*}
B_1-B=(\rho_\infty-1)(B-1)+(B-1)\int\frac{1}{(x-1)(x-B)}d\rho(x).
\end{equation*}
Using \eqref{1} we get
\begin{equation*}
    B_1-B=(B-1)\int\frac{B+1}{(x-1)(x-B)(x+1)}d\rho(x)>0.
\end{equation*}
Similarly
\begin{equation*}
-A_1+A=(A+1)\int\frac{A-1}{(x-1)(x+A)(x+1)}d\rho(x)>0.
\end{equation*}

\section{Integral representations}

Asymptotic relations for the extremal polynomials
are based on an integral representation of the conformal map $\Theta$.

Consider the
conformal map of $\bC\backslash I$ onto
on the annulus in Fig. \ref{fig10}.
\begin{figure}
\begin{center}
\includegraphics[scale=1.0]
{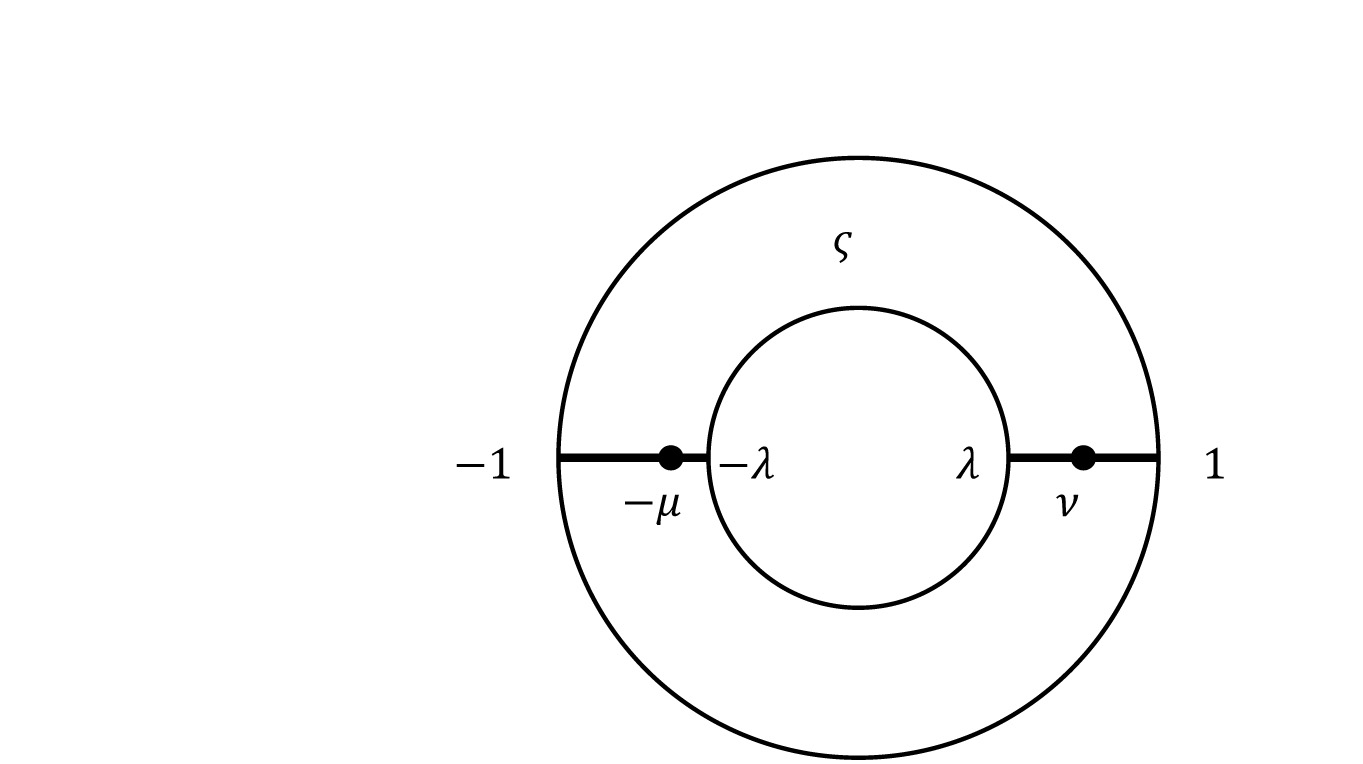}
\end{center}
\caption{The conformally equivalent annulus.}\label{fig10}
\end{figure}
Here we assume that the upper half-plane is
mapped on the upper part of the annulus with the
following boundary correspondence
\begin{equation}\label{efu2}
B\mapsto -1,\ -A\mapsto -\lambda,\ -1\mapsto\lambda,\
1\mapsto 1.
\end{equation}

By $G(z,z_0)$ we denote the (real)
Green function of the region $\bar\bbC\setminus I$,
$I=[-A,-1]\cup[1,B]$, with pole at $z_0$.
In particular $G(z):=G(z,\infty)$.
Recall that in the upper half-plane $G(z)$ can be represented
as the imaginary part of the conformal mapping $\Phi(z)$
of the upper half-plane
onto the region $\Pi$, Fig \ref{fig8}:
\begin{equation}\label{greengamma}
G(z)=\Im\Phi(z),\ \Im z>0,\ \Phi(\pm 1)=0,\
\Phi(\infty)=\infty.
\end{equation}
\begin{figure}
  \begin{center}
\includegraphics[scale=1]
{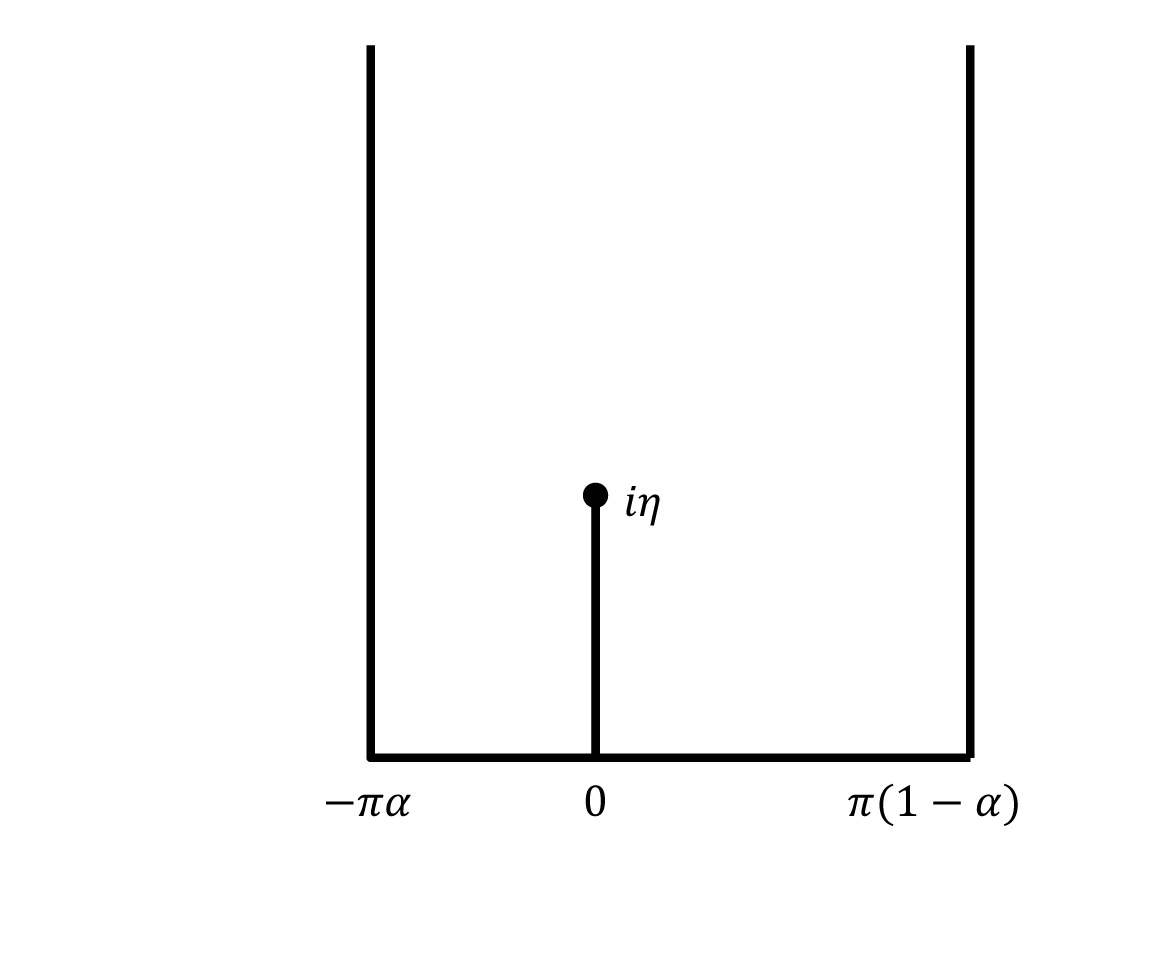}
\end{center}
\caption{The image $\Pi$ of the map $\Phi$.}\label{fig8}
\end{figure}

The map $\Phi(z)$ defines certain important
characteristics of the region: the critical value
\begin{equation}\label{greengamma1}
\eta=G(C), \quad C\in[-1,1]\quad\text{such that}
\quad\nabla G (C)=0,
\end{equation}
and the harmonic measure $\omega(z)$ of
the interval $[-A,-1]$. We have
\begin{equation}\label{greengamma2}
\Phi(-A)=-\pi\alpha,\quad\mbox{where}\quad \alpha=\omega(\infty).
\end{equation}

Now we associate to $\Phi(z)$ the  function
\begin{equation}\label{efu3}
g(\zeta)=-i\Phi(z(\zeta))
\end{equation}
where $\zeta$ belongs to the upper half of the annulus,
see Fig.~\ref{fig10}. This function can be extended
to the upper half-plane by the symmetry principle.
We have $G(z(\zeta))=\Re g(\zeta)$.
We call $g$ the {\em complex Green function}.

Let $-\mu\in[-1,-\lambda]$ corresponds to the infinity
in the $z$-plane, $-\mu=\zeta(\infty)$ (see Fig.~\ref{fig10}).
We define the jump function
\begin{equation}\label{jump}
j(\xi)=\begin{cases}1,&\xi\in(-1,-\mu)
    \\
    0,&\xi\in(-\mu,-\lambda)
    \end{cases}
\end{equation}
which we extend by the symmetry $j(1/\xi)=j(\xi)$,
$j(\lambda^2\xi)=j(\xi)$ on the whole negative ray.
Since
$$\Im g(\xi)=\left\{\begin{array}{ll}0,&\xi>0,\\
\pi\alpha,&\xi\in(-\mu,-\lambda),\\
\pi(\alpha-1),&\xi\in(-1,-\mu),
\end{array}\right.$$
we obtain the following
integral representation for $g(\zeta)$ in the upper half-plane
\begin{equation}\label{efu4}
g(\zeta)=
\int_{-\infty}^0\left\{\frac{1}{\xi-\zeta}-\frac{1}{\xi-1}\right\}
(\alpha -j(\xi))d\xi.
\end{equation}

\noindent
{\bf Remark.}
In the representation (\ref{efu4})
the normalization condition $\Phi(1)=0$ was used.
The second normalization condition $\Phi(-1)=0$ gives
$$\displaystyle\alpha=\overline{j}:=
\frac{\int_{-\infty}^0\left\{\frac{1}{\xi-\lambda}-\frac{1}{\xi-1}\right\}
j(\xi)d\xi}
{\int_{-\infty}^0\left\{\frac{1}{\xi-\lambda}-\frac{1}{\xi-1}\right\}d\xi}.$$
In what follows we will use the bar over a function to denote
similar averages.
\vspace{.1in}

Naturally we can simplify \eqref{efu4},
but the point is that we can write a similar representation
for the conformal mapping $\Theta_n(z)$.
Recall that for a given $n$, there exists a unique region
$\Pi(n)=\Pi^{\pm}_{k_1(n),k_2(n)}(h_n)$,  see Fig.~\ref{fig6},
such that the conformal mapping
$\Theta_n:  \bbC_+\to \Pi(n)$
represents the extremal polynomial \eqref{twin2}.
We define the function
\begin{equation}\label{efu7}
\theta_n(\zeta)=-i\Theta_n(z(\zeta)),\quad \theta_n(\lambda^2\zeta)=\theta_n(\zeta).
\end{equation}

We write the imaginary part of $\theta_n(\xi)$, $\xi<0$
as a sum
\begin{equation}\label{imtheta}
     \Im \theta_n(\xi)=\pi k_1(n)-\frac \pi 2+\pi nj(\xi)+\chi_n(\xi),
\end{equation}
so that $\chi_n(\xi)$ is a continuous function, which is normalized  by the condition $\chi_n(-\mu)=0$.
Then
\begin{equation}\label{eftheta}
\theta_n(\zeta)=
\frac 1 \pi \int_{-\infty}^0\left\{\frac{1}{\xi-\zeta}-\frac{1}{\xi-1}
\right\}(\pi k_1(n)-\frac \pi 2 -\pi nj(\xi)+\chi_n(\xi))d\xi.
\end{equation}

\begin{theorem} In the above notations
\begin{equation}\label{efu12}
\theta_n(\zeta)-n g(\zeta)
=\frac{1}\pi\int_{-\infty}^0\left\{\frac{1}{\xi-\zeta}-\frac{1}{\xi-1}
\right\}(\chi_n(\xi)-\bar\chi_n)d\xi,
\end{equation}
where
\begin{equation}\label{efu13}
\bar\chi_n=
\frac{\int_{-\infty}^0\left\{\frac{1}{\xi-\lambda}-\frac{1}{\xi-1}\right\}
\chi_n(\xi)d\xi}
{\int_{-\infty}^0\left\{\frac{1}{\xi-\lambda}-\frac{1}{\xi-1}\right\}d\xi}=
\pi n\alpha-\pi k_1(n)+\frac \pi 2.
\end{equation}
\end{theorem}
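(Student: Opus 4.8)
The statement is essentially an algebraic consequence of the two Herglotz-type representations \eqref{efu4} and \eqref{eftheta} together with the normalization of $\Theta_n$ at the point $z=-1$, so the plan is to extract it by a short computation. First I would form $\theta_n(\zeta)-n\,g(\zeta)$: writing $n(\alpha-j(\xi))=\frac{1}{\pi}\bigl(\pi n\alpha-\pi n\,j(\xi)\bigr)$ in \eqref{efu4} to match the factor $\frac1\pi$ in \eqref{eftheta} and subtracting, the terms involving the jump function cancel — this is the whole reason the representations were written in this form — and one is left with
\[
\theta_n(\zeta)-n\,g(\zeta)=\frac1\pi\int_{-\infty}^0\Bigl(\frac{1}{\xi-\zeta}-\frac{1}{\xi-1}\Bigr)\Bigl(\chi_n(\xi)+\pi k_1(n)-\frac{\pi}{2}-\pi n\alpha\Bigr)\,d\xi.
\]
Hence \eqref{efu12} holds provided the additive constant equals $-\bar\chi_n$, i.e.\ provided $\bar\chi_n=\pi n\alpha-\pi k_1(n)+\frac{\pi}{2}$; it remains only to establish the averaging identity \eqref{efu13}.

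For this I would use the second normalization condition. Since $\Theta_n(\pm1)=0$ and, by the boundary correspondence \eqref{efu2}, the annulus map sends $z=-1$ to $\zeta=\lambda$, we have $\theta_n(\lambda)=0$; and setting $\zeta=\lambda$ in the kernel $\frac{1}{\xi-\zeta}-\frac{1}{\xi-1}$ produces exactly the weight $W(\xi):=\frac{1}{\xi-\lambda}-\frac{1}{\xi-1}$ that appears in \eqref{efu13}. Putting $D:=\int_{-\infty}^0 W(\xi)\,d\xi$, evaluation of \eqref{eftheta} at $\zeta=\lambda$ gives
\[
0=\Bigl(\pi k_1(n)-\frac{\pi}{2}\Bigr)D-\pi n\int_{-\infty}^0 W(\xi)\,j(\xi)\,d\xi+\int_{-\infty}^0 W(\xi)\,\chi_n(\xi)\,d\xi.
\]
Now the Remark following \eqref{efu4} states precisely that $\alpha=\bar j=\bigl(\int_{-\infty}^0 W j\bigr)/D$, so $\int_{-\infty}^0 W\,j\,d\xi=\alpha D$. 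Substituting and solving for the remaining integral,
\[
\int_{-\infty}^0 W(\xi)\,\chi_n(\xi)\,d\xi=\Bigl(\pi n\alpha-\pi k_1(n)+\frac{\pi}{2}\Bigr)D,
\]
which is \eqref{efu13}. Combined with the previous computation this also yields \eqref{efu12}.

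The arithmetic is brief, so the only points requiring attention are structural. One should check that the integrals converge absolutely — the subtraction of the constant $\frac{1}{\xi-1}$ makes the kernel $O(\xi^{-2})$ as $\xi\to-\infty$, removing an otherwise logarithmic divergence, while $\chi_n$ is continuous and bounded on $(-\infty,0)$ because of its multiplicative periodicity, and $W$ has no singularity on $(-\infty,0)$ — and that the normalization of \eqref{efu4}, \eqref{eftheta} is indeed pinned at $\zeta=1$ (the integrand vanishes there, consistent with $g(1)=\theta_n(1)=0$). The one genuinely substantive ingredient — which I would regard as the crux, although it is really imported from the discussion preceding the theorem — is the recognition that the remaining normalization $\theta_n(\lambda)=0$, read through the identity $\alpha=\bar j$, is exactly what forces the $W$-weighted mean of $\chi_n$ to equal $\pi n\alpha-\pi k_1(n)+\frac{\pi}{2}$; everything else is bookkeeping.
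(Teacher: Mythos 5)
Your proposal is correct and follows exactly the paper's (very terse) proof: subtract the representation \eqref{efu4} of $n g(\zeta)$ from \eqref{eftheta} so that the $j$-terms cancel, then pin down the resulting constant via the normalization $g(\lambda)=\theta_n(\lambda)=0$ (your use of the Remark's identity $\alpha=\bar j$ is precisely the condition $g(\lambda)=0$). The added checks on convergence and on the normalization at $\zeta=1$ are sound bookkeeping that the paper leaves implicit.
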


\begin{proof}
We subtract $ng(\zeta)$ in the form \eqref{efu4} from \eqref{efu12}. Then, we use the second normalization
condition $g(\lambda)=\theta_n(\lambda)=0$.
\end{proof}

This representation will imply Fuchs' asymptotics
as soon as we show that $\chi_n(\xi)$ is uniformly bounded.

\section{Fuchs' asymptotics}

Let us begin with a simple remark.
\begin{lemma}\label{lsimple}
Let $w(z)$ be a conformal mapping of the upper
half-plane onto a sub-region of the upper half-plane
which contains the half-plane $\Im w>\tau_0$.
Assume the normalization $w(z)\sim z$,
$z\to\infty$.
%Let $z_0$ be a point such that $\Im w(z_0)=\tau>\tau_0$.
Then
\begin{equation}\label{estimatechi}
\Im w(z)-\Im z\in(0,\tau_0).
%\Im z_0=\tau-\tau_0 +q,\quad\mbox{where}\quad q\in(0,\tau_0).
\end{equation}
\end{lemma}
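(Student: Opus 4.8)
The plan is to exhibit $\Im w$ as the harmonic measure of a part of the boundary of the image region, and then to read off the inequality from the standard bounds on harmonic measure, or — what is cleaner here — to work directly with the conformal map and a normalized comparison map. Let $\Omega = w(\bbC_+)$, so $\Omega \subseteq \bbC_+$ and $\Omega \supseteq \{\Im w > \tau_0\}$. Write $u(z) = \Im w(z) - \Im z$, which is harmonic in $\bbC_+$. The normalization $w(z) \sim z$ at infinity guarantees that $w(z) - z$ is bounded near infinity (in fact tends to a finite limit along each ray, or at worst is $o(|z|)$), so $u$ is a bounded harmonic function on $\bbC_+$. Hence $u$ is the Poisson integral of its boundary values on $\bbR$, and it suffices to show $0 \le u \le \tau_0$ a.e. on $\bbR$, after which the strict inequality on the open half-plane follows from the maximum principle (the boundary values are not a.e.\ constant equal to an endpoint, since $w$ is a nonconstant conformal map).

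For the lower bound: since $\Omega \subseteq \bbC_+$, we have $\Im w(z) > 0$ throughout, and on $\bbR$ the function $\Im w$ has nonnegative boundary values while $\Im z = 0$; more carefully, $w$ maps $\bbR$ to $\partial\Omega \subseteq \overline{\bbC_+}$, so $\limsup$ of $\Im w$ along any boundary approach is $\ge 0$, giving $u \ge 0$ on $\bbR$, hence $u \ge 0$ in $\bbC_+$. For the upper bound: consider the function $\tau_0 - u(z) = \tau_0 - \Im w(z) + \Im z$. On the set where $w$ approaches the portion of $\partial\Omega$ lying in the closed strip $\{0 \le \Im w \le \tau_0\}$, this is $\ge \Im z \ge 0$ on $\bbR$; and since $\{\Im w > \tau_0\} \subseteq \Omega$, no boundary point of $\Omega$ has imaginary part exceeding $\tau_0$, so in fact $\partial\Omega \subseteq \{0 \le \Im w \le \tau_0\}$ entirely. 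Thus $\tau_0 - u$ has nonnegative boundary values on $\bbR$ and is bounded, so $\tau_0 - u \ge 0$ in $\bbC_+$, i.e.\ $u \le \tau_0$.

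The one genuine point requiring care — and the step I expect to be the main obstacle — is the justification that $u = \Im w - \Im z$ is \emph{bounded} on $\bbC_+$ and therefore recoverable from its boundary values by the Poisson formula, with no hidden linear-growth term (a term $c\,\Im z$ with $c \ne 0$ would wreck both inequalities). This is where the normalization $w(z) \sim z$ is essential: it forces $w(z) - z = o(|z|)$, and combined with the \emph{two-sided} confinement $\{\Im w > \tau_0\} \subseteq \Omega \subseteq \bbC_+$ one gets that $\Im w - \Im z$ stays in a bounded range along large semicircles, which upgrades $o(|z|)$ to genuine boundedness via a normal-families/Lindel\"of argument. Once boundedness is in hand, the rest is the routine harmonic-measure estimate sketched above, and the strict inclusion in $(0,\tau_0)$ on the open half-plane follows because $u$ is a nonconstant harmonic function (its boundary values cannot be identically $0$ or identically $\tau_0$, as that would force $w$ to be a translation or to have image in a horizontal line).
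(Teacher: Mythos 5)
Your proof is correct and follows essentially the same route as the paper: the paper's entire argument is the Poisson-integral representation $\Im w(z)=\Im z+\frac{1}{\pi}\int P(z,t)v(t)\,dt$ together with the observation that $0\le v(t)\le\tau_0$, which holds because $\partial\Omega$ lies in the strip $0\le\Im w\le\tau_0$. The extra attention you give to the boundedness of $\Im w-\Im z$ (needed to justify that representation with no residual $c\,\Im z$ term) addresses a point the paper passes over silently, so your write-up is, if anything, more complete.
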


\begin{proof}
Consider the integral representation of $\Im w(z)$
\begin{equation}\label{fu2}
%\tau=\tau-\tau_0+\Xi+\frac 1\pi
%\int_{-\infty}^\infty P(x,z_0) v(x) dx,
\Im w(z)=\Im z+\frac 1\pi\int_{-\infty}^\infty
P(z,t)v(t)dt,
\end{equation}
where $P(z,t)$ is the Poisson kernel.
Since $0\le v(t)\le\tau_0$ we obtain the desired
inequality.
\end{proof}

\begin{proposition}
There are constants $C_1$ and $C_2$ (depending of the given  system of intervals $I$) such that
\begin{equation}\label{fuassnew}
    C_1\le a_n-n\eta\le C_2.
\end{equation}
\end{proposition}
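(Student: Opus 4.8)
The plan is to reduce the two‑sided bound on $a_n-n\eta$ to two facts already available: the asymptotics \eqref{asmain0} of the error $L(a)$ of the entire extremal function of \S2, and Fuchs' estimate for $L_n$. The first step is to read off from the case analysis of \S3 that for the extremal polynomial $P_n$ one has, \emph{exactly},
\[
L_n=L(a_n).
\]
Indeed, all critical values of $P_n$ on the negative ray are $-1\pm L(a_n)$ and on the positive ray $1\pm L(a_n)$, with at most one exceptional critical value; and the corresponding exceptional critical point, when it exists, lies outside $[-A,B]\supset I$. Since in every case the point $-1$ (and, with at most one exception, also $-A$, $1$, $B$) is an alternance point at which $|P_n-\sgn|=L(a_n)$, the deviation of $P_n$ from $\sgn$ on $I$ equals $L(a_n)$.

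Next I would combine the two asymptotic inputs. By \eqref{asmain0}, $L(a)\sim\sqrt{2/\pi}\,a^{-1/2}e^{-a}$, hence $\log L(a)=-a-\tfrac{1}{2}\log a+O(1)$ as $a\to\infty$; moreover $L$ is continuous and decreasing with $L(a)\to0$, so from $L_n\to0$ (Fuchs) we get $a_n\to\infty$ and this expansion applies at $a=a_n$. On the other hand Fuchs' bound $C_1n^{-1/2}e^{-n\eta}\le L_n\le C_2 n^{-1/2}e^{-n\eta}$ gives $\log L_n=-n\eta-\tfrac{1}{2}\log n+O(1)$. Equating the two expressions for $\log L(a_n)=\log L_n$,
\[
a_n+\tfrac{1}{2}\log a_n=n\eta+\tfrac{1}{2}\log n+O(1),\qquad\text{i.e.}\qquad a_n-n\eta=\tfrac{1}{2}\log(n/a_n)+O(1).
\]

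It remains to control $\log(n/a_n)$. Setting $f_n=a_n/n$, the last relation reads $f_n=\eta-\tfrac{1}{2n}\log f_n+O(1/n)$, and a short bootstrap shows $f_n\to\eta$: were $f_n$ unbounded along a subsequence, the left‑hand side would grow while the right‑hand side stays bounded (or tends to $-\infty$, if $\log f_n$ grows proportionally to $n$); likewise $f_n$ cannot tend to $0$; hence $f_n$ lies in a compact subset of $(0,\infty)$, so $\tfrac{1}{2n}\log f_n\to0$ and $f_n\to\eta$. Consequently $\log(n/a_n)=-\log\eta+o(1)$ is bounded, and the displayed relation yields $C_1\le a_n-n\eta\le C_2$ with $C_1,C_2$ depending only on $A,B$.

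The step I expect to need the most care is the identity $L_n=L(a_n)$: everything hinges on the fact, established in \S3, that the exceptional critical value is attained off $I$, so that the extra slit of length $h_n$ never enlarges the deviation on $I$; one must also carry out the elementary bootstrap above excluding degenerate behaviour of $a_n/n$. An alternative, closer to the conformal‑mapping method of this section, is to compare $\Theta_n$ with $n\Phi$ directly via Lemma~\ref{lsimple}: after the change of variable $w\mapsto e^{-iw/n}$, which opens the width‑$\pi n$ comb $\Pi(n)$ into a subregion of $\bbC_+$ containing some half‑plane $\{\Im w>\tau_0\}$, Lemma~\ref{lsimple} controls the distortion of $\Im$; there the obstacle is the uniformity of $\tau_0$ in $n$, which relies on $\Pi(n)$ having width exactly $\pi n$ and on the walls $\delta_{\pm k}$ staying within $\pi/2$ of their vertical asymptotes.
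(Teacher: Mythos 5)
Your argument is logically sound, but it takes a genuinely different route from the paper's. You derive the two-sided bound from three inputs: the exact identity $L_n=L(a_n)$ (which does follow from the representation \eqref{twin2} and the case analysis of Section~3, since the one exceptional critical value is attained off $I$), the asymptotics \eqref{asmain0}, and Fuchs' two-sided estimate for $L_n$ quoted in the Introduction; the bootstrap showing $a_n/n\to\eta$ and hence the boundedness of $\tfrac12\log(n/a_n)$ is carried out correctly. The paper instead proves the Proposition internally: it applies Lemma~\ref{lsimple} to the fixed map $\psi:\bbC_{++}\to\Omega$ --- since $\arccos\le\pi/2$, the region $\Omega$ always contains the quarter-plane $\{\Re w>\pi/2,\ \Im w>0\}$, so the constant $\tau_0$ of Lemma~\ref{lsimple} equals $\pi/2$ independently of $a_n$, which resolves the uniformity issue you flag at the end of your sketch --- to conclude $|\chi_n(\xi)|\le 2\pi$, and then reads off \eqref{fuassnew} from the integral representation \eqref{efu12}, because $a_n$ and $n\eta$ are the maxima of $\theta_n$ and $ng$ on $(\lambda,1)$ and the kernel there has uniformly bounded $L^1$-norm. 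The difference matters in two ways. First, you import Fuchs' theorem from \cite{F1} as a black box, whereas the explicit purpose of this section (``This representation will imply Fuchs' asymptotics as soon as we show that $\chi_n(\xi)$ is uniformly bounded'') is to re-derive that estimate; your route is not circular, since \cite{F1} is an independent result, but it inverts the paper's logical flow. Second, and more substantively, your proof delivers only the displayed inequality and not the uniform bound on $\chi_n$, which is the actual content of the paper's argument and is needed again for Corollary~\ref{cc1} and for the limiting arguments of Section~6; so as a drop-in replacement it would leave later steps unsupported, even though as a proof of the literal statement it is acceptable.
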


\begin{proof}
Recall that the curves in Fig.~\ref{fig3}
were defined as preimages
of vertical lines in the region $\Omega$ in
Fig.~\ref{fig2}
under a conformal mapping
which maps the right half-plane into a sub-region
of the right half-plane.
Thus we can apply Lemma \ref{lsimple},
to obtain that $|\chi_n(\xi)|\le 2\pi$.  Therefore $|\chi_n(\xi)-\bar\chi_n|$ is also less than $2\pi$.

Now, $a_n$ is the maximum of $\theta_n(\xi)$ on the interval $(\lambda,1)$ and $\eta$ is the maximum of
$g(\xi)$ on the same interval. Due to the integral representation \eqref{efu12} the difference between these two functions is uniformly bounded in this interval. Thus \eqref{fuassnew} is proved.
\end{proof}

\begin{corollary}\label{cc1} The following limit relation holds
\begin{equation}\label{fu50}
\lim_{n\to\infty}\frac{\theta_n(\zeta)} {n}=g(\zeta),
\end{equation}
in particular
\begin{equation}\label{fu51}
\lim_{n\to\infty}\frac{k_1(n)} {n}=\alpha,\quad
\lim_{n\to\infty}\frac{\ln(1/L_n)}{n}=\lim_{n\to\infty}\frac{a_{n}}
{n}=\eta.
\end{equation}
\end{corollary}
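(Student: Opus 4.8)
The plan is to derive Corollary~\ref{cc1} as a direct consequence of Proposition~5.4 and the integral representation in Theorem~4.2. First I would obtain \eqref{fu50}. From \eqref{efu12} we have
\[
\theta_n(\zeta)-ng(\zeta)=\frac{1}{\pi}\int_{-\infty}^0\left\{\frac{1}{\xi-\zeta}-\frac{1}{\xi-1}\right\}(\chi_n(\xi)-\bar\chi_n)\,d\xi,
\]
and the proof of Proposition~5.4 established the uniform bound $|\chi_n(\xi)-\bar\chi_n|\le 2\pi$ (coming, via Lemma~5.1, from the fact that the curves $\delta_k$ are preimages of vertical lines under a conformal map of the right half-plane into itself). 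Hence the right-hand side is $O(1)$, uniformly for $\zeta$ in compact subsets of the upper half-plane bounded away from the real axis, since the kernel $\left\{\frac{1}{\xi-\zeta}-\frac{1}{\xi-1}\right\}$ is then absolutely integrable on $(-\infty,0)$ with a bound depending only on $\zeta$. Dividing by $n$ and letting $n\to\infty$ gives \eqref{fu50}.

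Next I would read off the two relations in \eqref{fu51}. The statement $\lim_{n\to\infty}k_1(n)/n=\alpha$ follows from \eqref{efu13}: there $\bar\chi_n=\pi n\alpha-\pi k_1(n)+\tfrac{\pi}{2}$, and since $|\bar\chi_n|\le 2\pi$ is bounded, dividing by $\pi n$ yields $k_1(n)/n=\alpha-\bar\chi_n/(\pi n)+1/(2n)\to\alpha$. For the relation $\lim a_n/n=\eta$, this is immediate from Proposition~5.4, since $C_1\le a_n-n\eta\le C_2$ forces $a_n/n\to\eta$. Finally, $\lim \ln(1/L_n)/n=\eta$ follows by combining $a_n/n\to\eta$ with the asymptotics \eqref{asmain0}, namely $\sqrt{a}\,e^a L(a)\to\sqrt{2/\pi}$ as $a\to\infty$: taking logarithms, $\ln(1/L(a))=a+\tfrac12\ln a-\tfrac12\ln(2/\pi)+o(1)$, so $\ln(1/L_n)=\ln(1/L(a_n))=a_n+O(\ln a_n)$, and dividing by $n$ and using $a_n=n\eta+O(1)$ gives the claim.

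The only mild subtlety — not really an obstacle — is the interchange of limit and integral needed for \eqref{fu50}; I would handle it by noting that the integrand is dominated, uniformly in $n$, by $\frac{2}{|\xi-\zeta|}+\frac{2}{|\xi-1|}$ times the constant $2$ (up to the $1/\pi$), which is integrable on $(-\infty,0)$ for each fixed $\zeta$ with $\Im\zeta>0$, so dominated convergence (or simply the crude bound, since we only need the $O(1)$ estimate, not the actual limit of the integral) applies. In fact for \eqref{fu50} one does not even need convergence of the integral: the $O(1)$ bound alone suffices after dividing by $n$. I would also remark that the convergence in \eqref{fu50} is locally uniform, which is what is actually used later; this again is immediate from the $\zeta$-uniform bound on the kernel over compact subsets of $\bbC_+$.
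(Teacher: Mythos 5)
Your argument is correct and follows essentially the same route as the paper, whose entire proof is ``divide \eqref{efu12} by $n$ and pass to the limit'': you use the same integral representation, the same uniform bound on $\chi_n-\bar\chi_n$ from the preceding proposition, and read off \eqref{fu51} from \eqref{efu13}, the bound \eqref{fuassnew}, and the asymptotics \eqref{asmain0} exactly as intended. The extra care you take with the integrability of the kernel and with $L_n=L(a_n)$ merely fills in details the paper leaves implicit.
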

\begin{proof}
We divide \eqref{efu12} by $n$ and pass to the limit.
\end{proof}

\begin{figure}
\begin{center}
\includegraphics[scale=0.8]{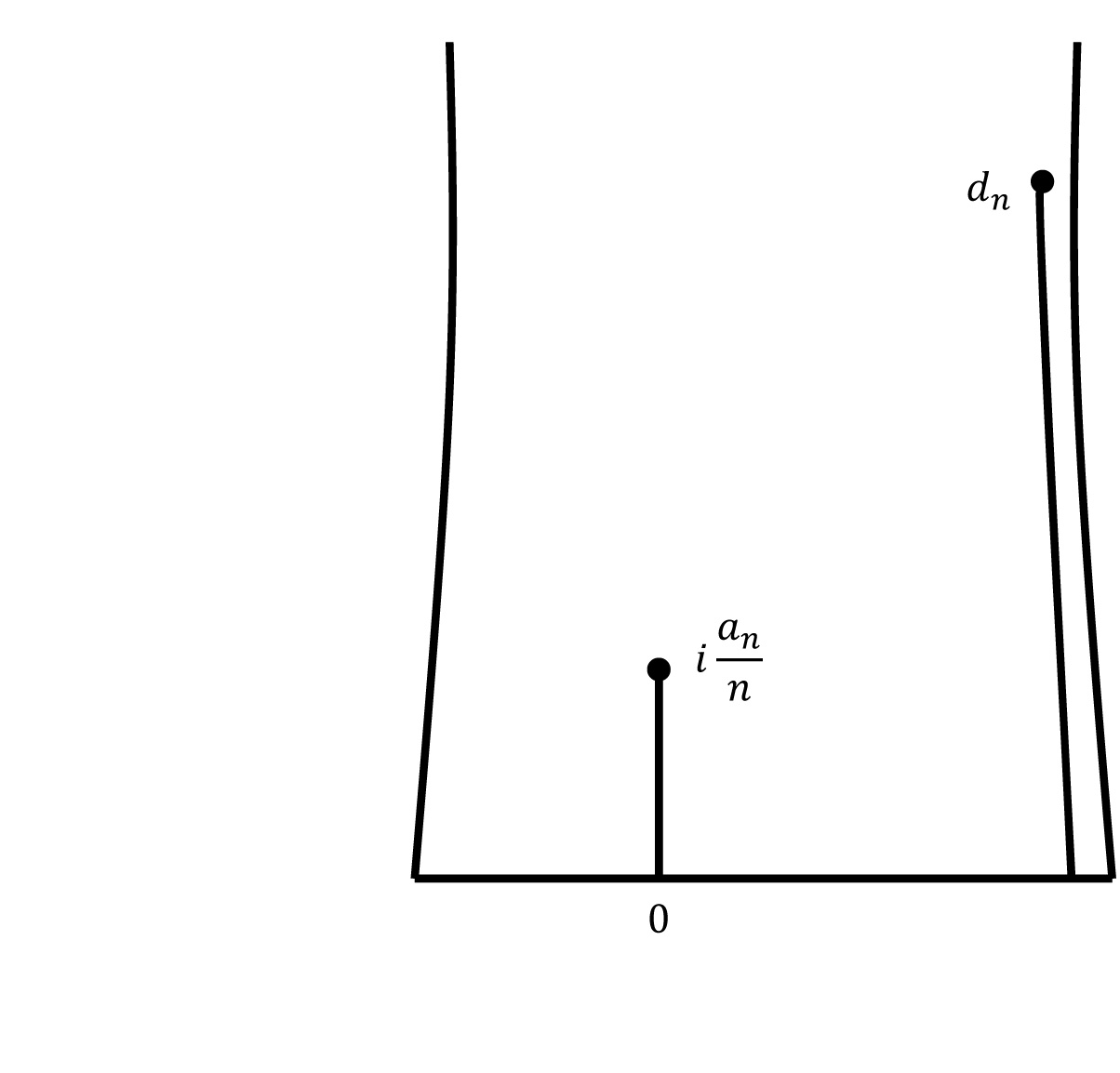}
\end{center}
\caption{The rescaled region $\Pi(n)/n$ for a large $n$.}
\label{figlinzdn}
\end{figure}

Corollary \ref{cc1} has the following geometric
interpretation.
Making the rescaling $\Pi(n)\to\Pi(n)/n$
we obtain the limit conformal mapping
onto the region shown in Fig. \ref{fig8}.
Let us look more carefully at the limit procedure,
see Fig \ref{figlinzdn}:
the distance between the additional cut and one of the
infinite cuts (left or right one) approaches zero,
however the position $d_n$ of the end point of the additional
cut influences the asymptotic behavior
along various subsequences $\{n_l\}$.
We define the subsequences by the condition:
there exists a
limit $d=d(\{n_l\})=\lim_{l\to\infty}d_{n_l}$.
Taking into account
this point $d$,
in the next section we describe the asymptotic
behavior of $L_n$ in a more precise way.

\section{The limit density $\chi(\xi)$}
We fixed a subsequence $\{n_l\}$
such that the limit $d=\lim_{l\to\infty}d_{n_l}$ exists.
Our main goal in this section is to show that the
limit density
\begin{equation}\label{ldn0}
\chi(\xi)=\lim_{l\to\infty}\chi_{n_l}(\xi)
\end{equation}
exists and to find this limit.

We start with the following general lemma.
Let $f$, be a bounded increasing differentiable
function defined for $x>0$, and suppose that
 $f(x)=0$ for $0<x\leq b$ with some $b>0$.
We consider the region
\begin{equation}\label{ldn1}
\tilde\Omega_f=\{z=x+iy: x>0,\; y>f(x)\},
\end{equation}
(it looks like the region
$\Omega$ in Fig. \ref{fig2} reflected in the line $x=y$).
Let $w$ be the conformal map
from the first quadrant $\bbC_{++}$ onto
$\tilde\Omega_f$ with the normalization
\begin{equation}\label{ldn2}
w(z)\sim z, \ z\to\infty,\quad w(0)=0.
\end{equation}
Let $a$ be the point such that $w(a)=b$.

\begin{lemma}\label{lemld1} Let $w(x)=u(x)+iv(x)$,
$x\ge a$. Then
$f(x)\le v(x)$.
\end{lemma}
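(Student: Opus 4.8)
The plan is to compare the conformal map $w$ onto $\tilde\Omega_f$ with the identity map and exploit a subordination (Lindelöf-type) argument analogous to Lemma~\ref{lsimple}. First I would observe that $\tilde\Omega_f\subset\bbC_{++}$, since $f\ge 0$ means every point of $\tilde\Omega_f$ has positive imaginary part, and clearly positive real part; moreover $\tilde\Omega_f$ contains the half-plane $\{\Im z>\sup_x f(x)\}$ intersected with $\{\Re z>0\}$. Thus $w$ maps the first quadrant into itself, is normalized by $w(z)\sim z$ at infinity and $w(0)=0$, and one expects $\Im w(z)\ge \Im z$ throughout $\bbC_{++}$ — essentially because enlarging the domain upward (removing the part below the graph of $f$) pushes image points up. The inequality $f(x)\le v(x)$ for $x\ge a$ should then follow by pushing this to the boundary: the boundary arc of $\bbC_{++}$ consisting of the positive real axis $\{x>a\}$ is mapped by $w$ onto the graph $\{u+if(u)\}$ (by the normalization $w(a)=b$ and since $w$ takes the boundary component $(a,\infty)$ to the curve $y=f(x)$), so along that arc one has exactly $v(x)=f(u(x))$ with $u(x)\ge b$; combined with monotonicity of $f$ and the relation between $x$ and $u(x)$ one reads off $f(x)\le v(x)$.

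More concretely, the key steps in order are: (1) verify the inclusion $\tilde\Omega_f\subset\bbC_{++}$ and that $w(\bbC_{++})=\tilde\Omega_f\subset\bbC_{++}$, so the composition with the identity is a self-map of the quadrant; (2) establish the global inequality $\Im w(z)\ge\Im z$ on $\bbC_{++}$ — I would do this either by the Poisson-type integral representation as in \eqref{fu2}, writing $\Im w(z)=\Im z+\frac1\pi\int P(z,t)v_0(t)\,dt$ with boundary density $v_0\ge 0$ (the density is nonnegative precisely because the image region lies in the upper half-plane and the boundary values of $\Im w$ are $\ge 0$), or by a reflection/subordination argument; (3) determine the boundary correspondence on the ray $x>a$: since $w$ is normalized with $w(0)=0$, $w(a)=b$, and $w(z)\sim z$, the piece of $\partial\bbC_{++}$ to the right of $a$ must map onto the graph $\gamma_f=\{x+if(x):x\ge b\}$, giving $u(x)\ge b$ and $v(x)=f(u(x))$ there; (4) conclude: $\Re w(x)=u(x)\ge\Re x = x$ would give $f(u(x))\ge f(x)$ by monotonicity of $f$, hence $v(x)=f(u(x))\ge f(x)$.

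The main obstacle I anticipate is step (3) together with the sign of $u(x)-x$: one needs that on the real ray $x>a$ the real part does not decrease, i.e. $u(x)\ge x$, which is the "horizontal" companion of the vertical inequality and is not automatic from $w(\bbC_{++})\subset\bbC_{++}$ alone. I would handle this by noting that $w$ restricted to the positive real axis is an increasing homeomorphism onto $[0,\infty)$ parametrizing $\partial\tilde\Omega_f$ (the segment $[0,a]\mapsto[0,b]$ on the imaginary-axis side after the corner, and $[a,\infty)\mapsto\gamma_f$), and that the normalization $w(z)=z+o(z)$ forces $u(x)-x\to0$; a Julia–Carathéodory / monotonicity argument at the boundary fixed behavior, or directly the representation $w(z)-z$ with nonnegative imaginary part and the known values at $0$ and $a$, should pin down $u(x)\ge x$ on $(a,\infty)$. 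Once that is in hand, monotonicity of $f$ closes the argument immediately. If the clean inequality $u(x)\ge x$ turns out to be false in general, the fallback is to argue directly from step (2): for $x>a$ real, approach $x$ from inside $\bbC_{++}$, so $v(x)=\lim_{z\to x}\Im w(z)\ge\Im z\to 0$ is too weak, hence one genuinely needs the boundary-correspondence description — so I would invest the bulk of the effort in making step (3) rigorous via the Carathéodory boundary theory for the (locally connected) domain $\tilde\Omega_f$.
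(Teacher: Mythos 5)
Your overall strategy coincides with the paper's: establish the boundary correspondence $v(x)=f(u(x))$ for $x\ge a$, prove the horizontal inequality $u(x)\ge x$, and conclude by monotonicity of $f$. You also correctly diagnose that this horizontal inequality is the crux and that it does not follow from $w(\bbC_{++})\subset\bbC_{++}$ alone. (Your step (2), the vertical inequality $\Im w\ge\Im z$, is not needed for the conclusion.) However, the one step you defer is essentially the entire content of the proof, and the justifications you sketch for it do not close it: Julia--Carath\'eodory controls angular derivatives at boundary fixed points, and the values $w(0)=0$, $w(a)=b$ are data at two points only; neither yields a pointwise inequality for $\Re w(x)-x$ along the whole ray. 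Likewise, knowing that $w(z)-z$ has nonnegative imaginary part in the upper half-plane constrains $\Im(w-z)$, but on the part of the real axis carrying the representing measure the \emph{real} part is a principal-value integral with no definite sign in general; an extra structural input is required.

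The paper supplies that input as follows. Extend $w$ by reflection in the imaginary axis to an odd self-map of the upper half-plane and write the Herglotz representation
\begin{equation*}
w(z)=z+\frac 1 \pi \int_{a}^\infty\left\{\frac 1{t-z}-\frac 1{t+z}\right\}v(t)\,dt,
\end{equation*}
the density being supported on $|t|>a$ because $w$ is real on $(-a,a)$. Taking boundary values at real $x>a$ gives
\begin{equation*}
u(x)=x+\frac 1 \pi \int_{a}^\infty\frac {2x}{t+x}\,\frac {v(t)-v(x)}{t-x}\,dt
+\frac{v(x)}{\pi}\ln\frac{x+a}{x-a},
\end{equation*}
and both correction terms are nonnegative: the logarithm is positive, and the integrand is nonnegative because $v(t)=f(u(t))$ is nondecreasing on $(a,\infty)$ (the boundary parametrization $t\mapsto u(t)$ is increasing and $f$ is increasing by hypothesis). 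This monotonicity of the boundary density is precisely the missing ingredient in your argument; once it is observed, $u(x)\ge x$ follows, and then $f(x)\le f(u(x))=v(x)$ as you intended.
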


\begin{proof} We extend $w$ by the symmetry principle
to the map of the upper half-plane into itself
(the extended map is still denoted by $w$),
and use the integral representation
\begin{equation}\label{ldn3}
w(z)=z+\frac 1 \pi \int_{a}^\infty\left\{\frac 1{t-z}-\frac 1{t+z}\right\}v(t)dt.
\end{equation}
For $x\ge a$ we have
\begin{equation}\label{ldn4}
w(x)=x+\frac 1 \pi \int_{a}^\infty\frac {2x}{t+x}\frac {v(t)-v(x)}{t-x}dt+\frac{v(x)}{\pi}
\ln\frac{x+a}{x-a}+iv(x).
\end{equation}
Therefore
\begin{equation}\label{ldn5}
u(x)=x+\frac 1 \pi \int_{a}^\infty\frac {2x}{t+x}\frac {v(t)-v(x)}{t-x}dt+\frac{v(x)}{\pi}
\ln\frac{x+a}{x-a}>x.
\end{equation}
Since $f(x)$ is increasing we obtain
\begin{equation}\label{ldn6}
f(x)<f(u(x))=v(x).
\end{equation}
\end{proof}

We apply Lemma \ref{lemld1} to obtain
the limit density for the conformal map of the first
quadrant onto
the region $\Omega$ in Fig.~2.
Namely, as before we consider the conformal map
$w(z)=-i\psi(-iz)$, where $\psi$ is defined in
(\ref{twoinf5}) and extended by symmetry to the
right half-plane,
and the integral representation \eqref{ldn3} for it.
Let us notice that in our case we have the exact
formula
\begin{equation}\label{ldn7}
f(x)=\arccos\frac{\cosh b}{\cosh x},\ x\ge b.
\end{equation}
Between the values $a$ and $b$ there is a one-to-one correspondence,
moreover $b\sim a+1/2\ln a$.
Thus we have the density $v(x)=v(x,a)$ in \eqref{ldn3}
as a function of the parameter $a$ and we are interested
in the limit density $\tilde v(x):=\lim_{a\to\infty}
v(ax,a)$.

\begin{lemma}\label{lemld2} The following limit exists
\begin{equation}\label{ldn8}
\tilde v(x):=\lim_{a\to\infty} v(ax,a)=\begin{cases} 0,
& x<1\\ \frac \pi 2,& x>1.
\end{cases}
\end{equation}
\end{lemma}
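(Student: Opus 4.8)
The plan is to extract both halves of \eqref{ldn8} directly from the boundary correspondence of the conformal map $w$ of the present setting (i.e.\ for the explicit $f$ of \eqref{ldn7}), reducing the one substantive point---the value $\pi/2$ for $x>1$---to Lemma~\ref{lemld1} together with the elementary asymptotics of $f$ and the relation $b\sim a+\tfrac12\ln a$.

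In the present case $\tilde\Omega_f=\{x+iy:x>0,\ y>f(x)\}$ with $f(x)=0$ for $0<x\le b$ and $f(x)=\arccos(\cosh b/\cosh x)$ for $x\ge b$, so the part of $\partial\tilde\Omega_f$ over $\{x>b\}$ is the curve $\gamma=\{x+if(x):x\ge b\}$, whose endpoint is the point $b$ (since $f(b)=\arccos1=0$). As $w(0)=0$, $w(a)=b$, $w(\infty)=\infty$, the boundary arc $(0,a)$ is carried onto the real segment $(0,b)$ and the arc $(a,\infty)$ onto $\gamma\setminus\{b\}$. Hence for $0<t<a$ one has $w(t)\in(0,b)\subset\bbR$, so $v(t,a)=\Im w(t)=0$; and for $t>a$ one has $w(t)=u(t,a)+iv(t,a)\in\gamma\setminus\{b\}$, so $v(t,a)=f(u(t,a))$ with $u(t,a)=\Re w(t)>b$, whence $0<v(t,a)<\pi/2$.

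For $x<1$ the claim is then immediate: $ax\in(0,a)$ for every $a>0$, so $v(ax,a)=0$ and $\tilde v(x)=0$. For $x>1$ we have $ax>a$, so $v(ax,a)<\pi/2$ by the previous paragraph, while Lemma~\ref{lemld1} furnishes the matching lower bound $v(ax,a)\ge f(ax)$. It remains to see that $f(ax)\to\pi/2$. Since $b\sim a$ (a fortiori, using $b\sim a+\tfrac12\ln a$) and $x>1$, we have $b-ax\to-\infty$, hence
\[
\frac{\cosh b}{\cosh(ax)}=e^{\,b-ax}\,\frac{1+e^{-2b}}{1+e^{-2ax}}\ \longrightarrow\ 0
\]
(more precisely $\cosh b/\cosh(ax)\sim\sqrt a\,e^{a(1-x)}$), so $f(ax)=\arccos(\cosh b/\cosh(ax))\to\arccos 0=\pi/2$ by continuity of $\arccos$. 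The squeeze $f(ax)\le v(ax,a)<\pi/2$ then gives $v(ax,a)\to\pi/2=\tilde v(x)$, which is \eqref{ldn8}.

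I do not expect a real obstacle: the statement is a soft consequence of the geometry. The two points that need a little care are (i) pinning down the boundary correspondence of $w$, since it is exactly this that splits the positive axis into the flat part $(0,a)$, on which $v(\cdot,a)$ vanishes identically, and the part $(a,\infty)$ sitting on $\gamma$; and (ii) the strict inequality $u(t,a)>b$ for $t>a$, which supplies $v<\pi/2$ and hence the upper half of the squeeze. Informally, the lemma just says that after rescaling by $a$ the densities $v(\cdot,a)$ degenerate to the step function \eqref{ldn8}, the jump being located at the image of the point $t=a$.
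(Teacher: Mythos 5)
Your proof is correct and follows essentially the same route as the paper: the paper also dismisses $x<1$ as evident, and for $x>1$ squeezes $v(ax,a)$ between the lower bound $f(ax)=\arccos(\cosh b/\cosh ax)$ supplied by Lemma~\ref{lemld1} together with $b\sim a+\tfrac12\ln a$ (giving $\cosh b/\cosh ax\sim\sqrt a\,e^{-(x-1)a}\to0$) and the trivial upper bound $\pi/2$. Your write-up merely makes explicit the boundary correspondence that the paper leaves implicit.
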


\begin{proof}
It is evident, that $\tilde v(x)=0$ for $x\in (0,1)$.
For $x>1$ we use Lemma~\ref{lemld1} and the
asymptotic relation between $a$ and $b$:
\begin{equation}\label{ldn9}
v(ax,a)\ge \arccos\frac{\cosh b}{\cosh ax}\sim
\arccos \frac{\sqrt{a}}{e^{(x-1)a}}.
\end{equation}
On the other hand  $v(ax,a)\le \pi/2$,
thus the lemma is proved.
\end{proof}

Now we are in position to evaluate the limit density \eqref{ldn0}.

\begin{theorem} Let $\{n_k\}$ be a subsequence
such that $\lim d_{n_k}=d$. Without loss of generality,
we assume that $\Re d=\pi(1-\alpha)$
(alternatively $\Re d=\pi\alpha$).
The relation
\begin{equation}\label{defd}
d=\Phi(D)=ig(-\kappa),
\end{equation}
uniquely defines
$D\in [B,\infty]$ and $-\kappa\in[-1,-\mu]$.
Then
\begin{equation}\label{ld6}
\chi(\xi)=\lim_{l\to\infty}\chi_{n_l}(\xi)=\begin{cases}
\displaystyle
\frac 1 2\int_{|t|<\eta}
\frac{\pi\alpha}{(t-y)^2+(\pi\alpha)^2}dt,& -\mu<\xi<-\lambda,
\\
\displaystyle
\frac 1 2\int_{|t|<\eta}
\frac{\pi(\alpha-1)}{(t-y)^2+(\pi(\alpha-1))^2}dt,&
 -\kappa<\xi<-\mu,\\
\displaystyle
\frac 1 2\int_{|t|<\eta}
\frac{\pi(\alpha-1)}{(t-y)^2+(\pi(\alpha-1))^2}dt+\pi,&
 -1<\xi<-\kappa,\\
\end{cases}
\end{equation}
where $y=\Re g(\xi)$.
\end{theorem}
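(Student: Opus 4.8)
The plan is to pass to the limit in the integral representation \eqref{efu12} together with the geometric picture of Fig.~\ref{figlinzdn}. Recall that $\chi_n(\xi)$ records, via \eqref{imtheta}, the deviation of $\Im\theta_n(\xi)$ from the ``straight'' comb profile $\pi k_1(n)-\pi/2+\pi n j(\xi)$; equivalently, after the rescaling $\Pi(n)\to\Pi(n)/n$ studied in Corollary~\ref{cc1}, it measures the local distortion of the conformal map near the two infinite cuts and near the additional cut whose tip sits at $d_n\to d$. The first step is to localize: by Proposition~5.4 the quantities $a_n-n\eta$, $k_1(n)-n\alpha$ stay bounded, so along the fixed subsequence $\{n_l\}$ the rescaled regions $\Pi(n_l)/n_l$ converge in the sense of Carathéodory to the limit region of Fig.~\ref{fig8} with \emph{three} slits collapsing together near $D$ (the endpoint determined by \eqref{defd}); the unscaled picture near each collapsing cluster of cuts looks, after a local affine change of variables matched to $\eta_1$, like the model region $\Omega$ of Fig.~\ref{fig2}. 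This is exactly the situation handled by Lemmas~\ref{lemld1} and~\ref{lemld2}: the limit density of the model map is the step function $\tilde v$ of \eqref{ldn8}, i.e. a jump of height $\pi/2$ across each critical slit.

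The second step is to translate this local information into the stated Poisson-type integrals. Write $y=\Re g(\xi)$, which by \eqref{efu3} is the Green function value $G(z(\xi))$; near $C$ it behaves like the ``height'' coordinate in the $\Phi$-plane, and the critical level is $|y|=\eta$. Each of the (at most three) cuts collapsing near the point $D$ contributes, in the limit, a term obtained by transporting the model jump $\tilde v$ through the complex Green coordinate: a cut of ``type'' corresponding to harmonic-measure value $\alpha$ on one side and $\alpha-1$ on the other produces, via the Schwarz/Poisson kernel of the strip of width $\pi|\alpha|$ resp.\ $\pi|\alpha-1|$, a contribution $\tfrac12\int_{|t|<\eta}\frac{\pi\alpha}{(t-y)^2+(\pi\alpha)^2}\,dt$, resp.\ the analogous one with $\alpha-1$. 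The bookkeeping of \emph{which} cut lies on which side of $-\mu$ (the preimage of $\infty$) and of $-\kappa$ (the preimage of $D$, fixed by \eqref{defd}) is precisely what produces the three cases in \eqref{ld6}: on $(-\mu,-\lambda)$ only the right infinite cut is felt (kernel with $\alpha$); on $(-\kappa,-\mu)$ it is the left infinite cut (kernel with $\alpha-1$); and on $(-1,-\kappa)$, past the collapsing additional cut, one picks up in addition the full extra period $+\pi$ coming from the height $h_{n_l}\to\infty$ of that cut (cf.\ the Remarks after Proposition~3.1: as $h\to\infty$ the region $\Pi^\pm_{k_1,k_2}(h)$ degenerates and one whole tooth is absorbed). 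The normalization $\chi(-\mu)=0$ is inherited from $\chi_n(-\mu)=0$ and is consistent with the first line of \eqref{ld6} vanishing at $\xi=-\mu$, where $y=0$ and the integrand is odd in $t$.

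The third step is to justify that the convergence $\chi_{n_l}(\xi)\to\chi(\xi)$ is genuine and identifies with this candidate, not merely that a subsequential limit exists. Here I would use that $|\chi_n(\xi)|\le 2\pi$ uniformly (Proposition~5.4) together with the Carathéodory convergence of the domains, which forces locally uniform convergence of the normalized conformal maps $\theta_{n_l}$, hence pointwise convergence of their boundary imaginary parts away from the finitely many collapsing points; then \eqref{efu12}, read in the limit, shows that $\chi$ must be the boundary density of the limit map, which by the local model analysis is the function displayed. The main obstacle I anticipate is precisely this local-to-global matching: making rigorous that, near the cluster of three collapsing slits, the rescaled conformal map converges to the composition of the Green coordinate with the model map $w$ of Lemma~\ref{lemld1}, with the correct affine scaling (governed by $\eta_1=-\tfrac12 G''(C)$, since the quadratic vanishing of $G-\eta$ at $C$ is what sets the local length scale). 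Controlling uniformity of this approximation up to the boundary, and correctly accounting for the sign conventions $\Re d=\pi(1-\alpha)$ vs.\ $\pi\alpha$ that decide whether the additional cut merges with the left or the right infinite cut, is where the real work lies; the rest is the Poisson-kernel computation already prepared by Lemmas~\ref{lemld1}--\ref{lemld2}.
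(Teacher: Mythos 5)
You have assembled the right ingredients --- the uniform bound $|\chi_n|\le 2\pi$, the model map of Lemmas~\ref{lemld1}--\ref{lemld2} with its limit density $\tilde v$, and the leading-order asymptotics of Corollary~\ref{cc1} --- but the step that actually produces the three displayed Poisson integrals is missing, and what you substitute for it is partly incorrect. The paper's derivation hinges on one exact identity that your ``transport the model jump through the complex Green coordinate'' paraphrase does not supply: for $\xi$ in each of the three intervals the point $z_l=\theta_{n_l}(\xi)$ lies on a specific boundary curve of $\Pi(n_l)$, namely $\delta_{-k_1(n_l)}$, $\delta_{k_2(n_l)}$, or the slit along $\delta_{k_2(n_l)-1}$, so its image $w_l$ under the model map lies on an \emph{exactly known} horizontal line, $\Im w_l=\pi k_1(n_l)$, $\pi k_2(n_l)$ or $\pi(k_2(n_l)-1)$ respectively. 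Substituting this together with $\Im z_l=\pi k_1(n_l)-\pi/2+\chi_{n_l}(\xi)$ (and its analogues) into the Poisson representation \eqref{ldn3} of the model map converts $\pi/2-\chi_{n_l}(\xi)$ into an explicit Poisson integral of the density $v(\cdot,a_{n_l})$ evaluated at a point of height $\sim\pi n_l\alpha$ (resp.\ $\pi n_l(1-\alpha)$); rescaling by $a_{n_l}\sim n_l\eta$ and applying Lemma~\ref{lemld2} (with the bound $v\le\pi/2$ to justify passing to the limit under the integral) yields exactly $\tfrac12\int_{|t|<\eta}$ of the stated kernels. Without this identity your ``second step'' is a heuristic, not a derivation.

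Two concrete errors in your account. First, the $+\pi$ on $(-1,-\kappa)$ does not come from ``the height $h_{n_l}\to\infty$'' and the Carath\'eodory degeneration discussed in the Remarks of Section~3: the slit lengths $h_{n_l}$ need not tend to infinity (only the tips $d_{n_l}$ are assumed to converge), and the shift is simply the drop of index from $\delta_{k_2(n_l)}$ to $\delta_{k_2(n_l)-1}$, i.e.\ $\Im w_l=\pi(k_2(n_l)-1)$ instead of $\pi k_2(n_l)$. Second, $\eta_1=-\tfrac12G''(C)$ plays no role in this theorem; the scale in Lemma~\ref{lemld2} is set by $a_{n_l}$ alone, and $\eta_1$ enters only in Section~7 when the limit relation is evaluated at the critical point $\nu$. (Also, at most two cuts cluster on one side, not three.) These slips are repairable, but as written the proposal does not prove \eqref{ld6}.
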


\begin{figure}
  \begin{center}
\includegraphics[scale=0.7]
{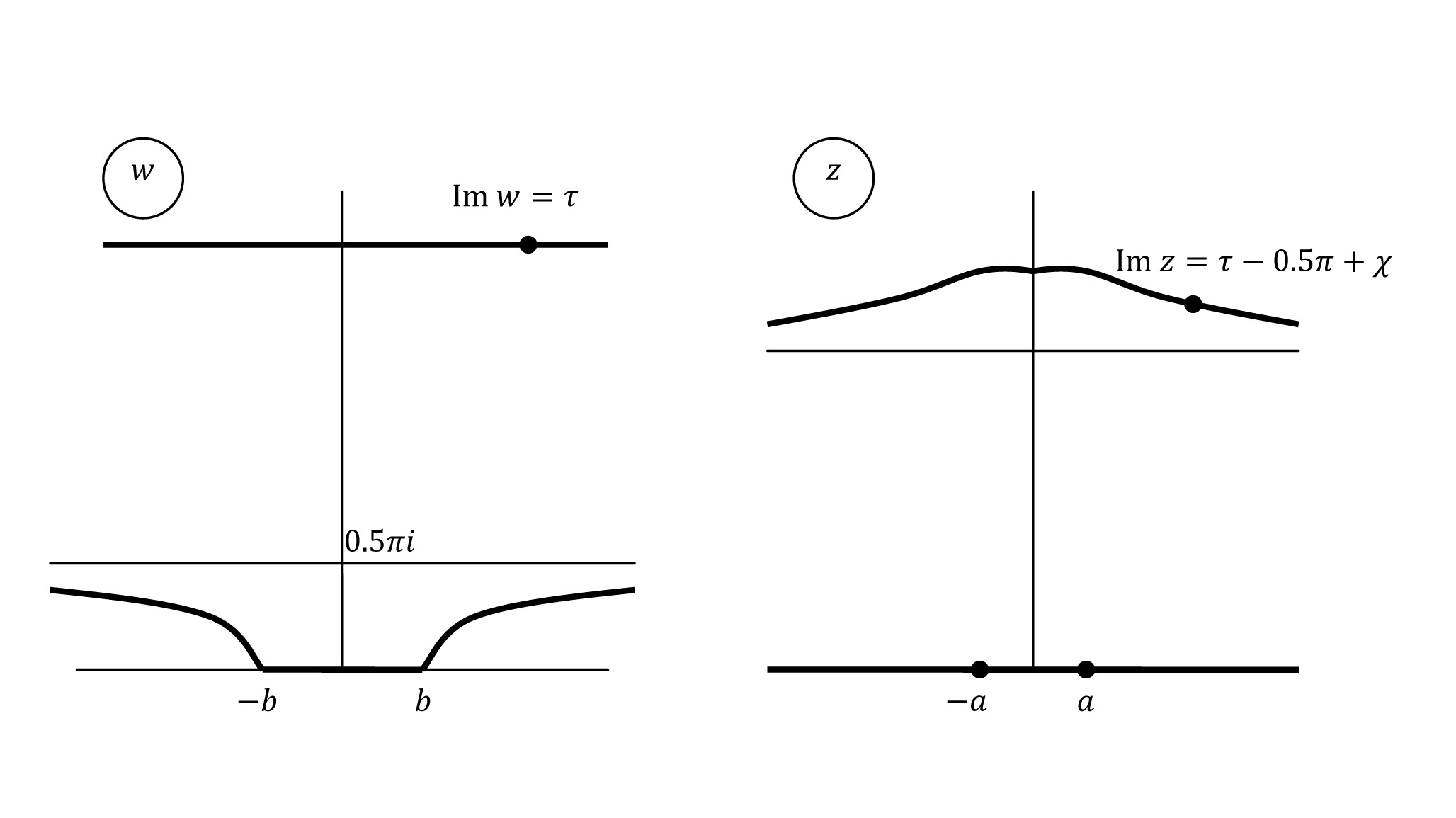}
\end{center}
  \caption{The preimage of the level line $\Im w=\tau$
}\label{last}
\end{figure}

\begin{proof} First we assume that $-\mu<\xi<-\lambda$. Let $z_l=\theta_{n_l}(\xi)$.
For a sufficiently large $l$, by \eqref{ldn3}, we have
\begin{equation}\label{ld2}
 \Im w_l= \Im z_l+\frac 1 \pi\int_{|t|>a_{n_l}}
\frac{\Im z_l}{(t-\Re z_l)^2+\Im z_l^2}v(t,a_{n_l})dt.
\end{equation}
Substituting
$\Im z_l=\pi k_1(n_l)-\frac{\pi}{2}+\chi_{n_l}(\xi)$
and $\Im w_l= \pi k_1(n_l)$ to (\ref{ld2}) (see Fig.~\ref{last}) we obtain
\begin{equation}\label{ld3}
\begin{split}\displaystyle
\pi/2-\chi_{n_l}(\xi)=\displaystyle &\frac 1\pi
\int_{|t|>a_{n_l}}
\frac{(\pi k_1(n_l)-\pi/2+\chi_{n_l}(\xi))v(t,a_{n_l})}
{(t-y_{l}(\xi))^2+(\pi k_1(n_l)-\pi/2+\chi_{n_l}(\xi))^2}dt\\ \displaystyle
 =&\frac 1\pi\int_{|t|>1}
\frac{(\pi k_1(n_l)-\pi/2 +  \chi_{n_l}(\xi))a_{n_l}v(a_{n_l}t,a_{n_l})}
{(a_{n_l}t- y_{l}(\xi))^2+(\pi k_1(n_l)-\pi/2+\chi_{n_l}(\xi))^2}dt
\end{split}
\end{equation}

By the leading term asymptotics,
Corollary \ref{cc1},  we have
\begin{equation}\label{ldn100}
\lim_{l\to\infty}\frac{k_1(n_l)}{n_l}=
\alpha,\ \lim_{l\to\infty}\frac{a_{n_l}}{n_l}=\eta,
\lim_{l\to\infty}\frac{y_l(\xi)}{n_l}=y(\xi).
\end{equation}
Passing to the limit in \eqref{ld3} we get
 \begin{equation}\label{ld4}
 \pi/2-\chi(\xi)=\frac 1\pi\int_{|t|>1} \frac {\eta\pi\alpha}{(\eta t-y)^2+(\pi\alpha)^2}\tilde v(t)dt,
\end{equation}
By Lemma~\ref{lemld2}, after trivial simplifications
we obtain the first equation in \eqref{ld6}.

In the second case $-\kappa<\xi<-\mu$,
for sufficiently large $l$, the point  $z_l$
corresponds to a point $w_l$ on the line
$\Im w_l=\pi k_2(n_l)$.
Thus we can repeat the previous arguments with
$\alpha$ replaced by $1-\alpha$
(let us mention that $\chi_{n_l}(\xi)$ is
negative here).

In the last case $\Im w_l=\pi (k_2(n_l)-1)$,
and this leads to the shift of the limit value by $\pi$.
\end{proof}

\section{Simplifying the result}

In this section  we prove the following theorem, which is our
main result, and which implies Theorem~1.1.

\begin{theorem}\label{th7.1} Let $\nu$ be the point in
the interval $(\lambda,1)$,
such that $g(\nu)=\eta$.
Fix a subsequence $\{n_l\}$ such that
$\lim_{l\to\infty}{d_{n_l}}=d=ig(-\kappa)$.
Let $g(\zeta,\nu)$ and $g(\zeta,-\kappa)$ be the
corresponding complex Green functions.
Then
\begin{equation}\label{result}
%\begin{split}
\lim_{l\to\infty}\{\theta_{n_l}(\zeta)-n_l g(\zeta)\}=
\frac 1 2 \ln\frac{\eta-g(\zeta)}{\eta+g(\zeta)}+g(\zeta,\nu)
-g(\zeta,-\kappa).
\end{equation}
\end{theorem}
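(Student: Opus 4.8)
The plan is to combine the integral representation of Theorem~4.1 with the explicit limit density computed in Theorem~6.1. From \eqref{efu12} we have the exact identity
\[
\theta_{n_l}(\zeta)-n_l g(\zeta)=\frac1\pi\int_{-\infty}^0\left\{\frac{1}{\xi-\zeta}-\frac{1}{\xi-1}\right\}(\chi_{n_l}(\xi)-\bar\chi_{n_l})\,d\xi,
\]
so the first step is to justify passing to the limit under the integral sign: by the Proposition in Section~5 the densities $\chi_{n_l}$ are uniformly bounded by $2\pi$, they converge pointwise to $\chi(\xi)$ by Theorem~6.1, and the kernel is integrable near $0$ and at $-\infty$ after the subtraction of the $1/(\xi-1)$ term, so dominated convergence applies. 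This reduces the problem to evaluating
\[
\frac1\pi\int_{-\infty}^0\left\{\frac{1}{\xi-\zeta}-\frac{1}{\xi-1}\right\}(\chi(\xi)-\bar\chi)\,d\xi,
\]
where $\bar\chi$ is the corresponding average (the limit of $\bar\chi_{n_l}$), and identifying it with the right-hand side of \eqref{result}.

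The second step is to recognize each of the three pieces of the limit density \eqref{ld6} as the boundary values of a complex Green function composed with $g$. Here $y=\Re g(\xi)$ is the real part of the complex Green function, so the three Poisson-type integrals $\frac12\int_{|t|<\eta}\frac{\pi\beta\,dt}{(t-y)^2+(\pi\beta)^2}$ are exactly the harmonic functions of $\zeta$ obtained by pulling back, via the conformal map $g$, the standard harmonic measure/Green-function kernels of the slit strip. Concretely, since $g$ maps the annulus region onto the slit half-plane with $g(\nu)=\eta$, the function $\frac12\ln\frac{\eta-g(\zeta)}{\eta+g(\zeta)}$ accounts for the jump of size $\pi$ in the third case (it is the Green function type term with logarithmic singularity reflecting the extra $+\pi$), while $g(\zeta,\nu)$ and $g(\zeta,-\kappa)$ supply the remaining harmonic pieces with the correct boundary values $0$, $\pi\alpha$, $\pi(\alpha-1)$ on the appropriate arcs, matching \eqref{jump} and the boundary behavior of $\Im g$ recorded before \eqref{efu4}. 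The relation \eqref{defd} pins down $-\kappa$ so that the singularity of $g(\zeta,-\kappa)$ sits at the point where the density switches between the second and third cases.

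The third step is bookkeeping: one checks that the candidate right-hand side of \eqref{result}, call it $F(\zeta)$, is harmonic in the upper half of the annulus (away from $\nu$ and $-\kappa$), has the same boundary values on the real line as the limit integral — computed from \eqref{ld6} — and satisfies the normalization $F(\lambda)=0$ forced by $g(\lambda)=\theta_{n_l}(\lambda)=0$ and $\bar\chi$ being defined as the average that kills the value at $\lambda$. Since a harmonic function in this domain is determined by its boundary values together with the normalization and its singularities, $F$ coincides with the limit of $\theta_{n_l}-n_l g$. One must be careful that the logarithmic singularity of $\frac12\ln\frac{\eta-g(\zeta)}{\eta+g(\zeta)}$ at $\zeta=\nu$ and that of $g(\zeta,\nu)$ cancel or combine correctly so that the total has only the prescribed singularity structure; verifying this cancellation and the sign conventions is the main obstacle, since it requires care with which sheet of the annulus each point lies on and with the factor of $\pi$ relating $\Im g$ to harmonic measure.

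I expect the hardest part to be precisely this matching of singularities and boundary constants: the three-case formula \eqref{ld6} has to be reassembled into the compact closed form in \eqref{result}, and this amounts to recognizing a sum of Poisson integrals over $|t|<\eta$ (with varying residue $\pi\alpha$ or $\pi(\alpha-1)$ and a jump $+\pi$) as a logarithm plus two Green functions. The cleanest route is to differentiate or to exponentiate: compare $\exp(2F(\zeta))$ with $\frac{\eta-g(\zeta)}{\eta+g(\zeta)}\cdot e^{2g(\zeta,\nu)-2g(\zeta,-\kappa)}$ as meromorphic functions on the doubled annulus, check they have the same zeros, poles, and modulus on the boundary, and conclude equality. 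Everything else — the dominated-convergence justification and the identification of $\bar\chi$ — is routine given Theorems~4.1, 5.x and 6.1.
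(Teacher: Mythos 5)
Your plan follows essentially the same route as the paper: pass to the limit in the exact identity \eqref{efu12} using the uniform bound $|\chi_n|\le 2\pi$ and the limit density \eqref{ld6}, split $\chi$ into a jump plus a continuous part, identify the jump's contribution with a complex Green function via \eqref{jump3}, and recognize the continuous Poisson-integral density as boundary data of an explicit analytic function of $g(\zeta)$. Two points, though. First, your attribution in step 2 is backwards: the $+\pi$ jump at $-\kappa$ in the third case of \eqref{ld6} is carried by the term $-g(\zeta,-\kappa)$ (whose imaginary part drops by $\pi$ across $-\kappa$), while the logarithm $\frac 1 2 \ln\frac{\eta-g(\zeta)}{\eta+g(\zeta)}$ \emph{together with} $g(\zeta,\nu)$ is the Cauchy transform of the continuous part $\chi_c$; this is exactly the content of the paper's Lemma~\ref{ll}, equation \eqref{ae0}. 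Second, the computation you defer as ``the hardest part'' is done in the paper by one short observation, \eqref{ae1}: for $\xi$ on the negative ray one has $g(\xi)=y+i\pi\alpha$ (or $y+i\pi(\alpha-1)$), and the Poisson integral $\frac 1 2\int_{|t|<\eta}\frac{\pi\alpha\,dt}{(t-y)^2+(\pi\alpha)^2}$ is precisely $\frac 1 2\Im\ln\frac{\eta-z}{-\eta-z}$ at $z=g(\xi)$, so $\chi_c$ is the boundary trace of $f(\zeta)=\frac 1 2\ln\frac{\eta-g(\zeta)}{-\eta-g(\zeta)}$ on the negative ray. The only remaining subtlety --- which your uniqueness argument would have to confront --- is that \eqref{efu12} integrates only over $(-\infty,0)$ whereas $f$ also has boundary data $\pm\pi/2$ on $(\lambda,\nu)$ and $(\nu,1)$; the term $g(\zeta,\nu)$ is exactly what compensates for the positive-ray contribution (compare \eqref{ae4} and \eqref{ae5}), and it simultaneously cancels the logarithmic singularity of $f$ at $\nu$ (where $g'(\nu)=0$, so $\eta-g(\zeta)$ vanishes to second order). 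With these two corrections your argument coincides with the paper's proof.
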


\noindent
{\em Proof.}
First of all we split $\chi(\xi)$ into the sum of a continuous function $\chi_c(\xi)$ and the jump
\begin{equation}\label{jump2}
j_1(\xi)=\begin{cases}
1, &\xi\in(-1,-\kappa)\\
0, &\xi\in (-\kappa,-\lambda)
\end{cases}
\end{equation}
As usual the jump function is extended to the negative ray by the reflections $j_1(1/\xi)=j_1(\xi)$,
$j_1(\lambda^2\xi)=j_1(\xi)$.

Note that the jump function is related to the Green
function $G(z,D)$, compare \eqref{jump} and (\ref{efu4}).
Since
$$\Im g(\xi,-\kappa)=\left\{\begin{array}{ll}0,&\xi>0,\\
\pi\omega(D),&\xi\in(-\kappa,-\lambda),\\
\pi(\omega(D)-1),&\xi\in(-1,-\kappa),
\end{array}\right.$$
we obtain the following
integral representation for $g(\zeta,-\kappa)$ in the upper half-plane
\begin{equation}\label{jump3}
g(\zeta,-\kappa)=\int_{-\infty}^0\left\{\frac 1{\xi-\zeta}-
\frac 1{\xi-1}\right\}(\bar j_1-j_1(\xi))d\xi,
\end{equation}
where we use the notation $\bar j_1$ introduced in the Remark
in Section~4.
Notice that
$G(z(\zeta),D)=\Re g(\zeta,-\kappa)$, and
\begin{equation}\label{newxx}
\bar j_1=\omega(D).
\end{equation}

Due to the chosen normalization $\chi(-\mu)=0$, we have
\begin{equation}\label{chi2}
\chi(\xi)=\begin{cases}
\chi_c(\xi)+\pi j_1(\xi), &-\kappa<-\mu\\
\chi_c(\xi)+\pi j_1(\xi)-\pi, &-\mu<-\kappa
\end{cases}
\end{equation}

The main point is to evaluate the Cauchy transform
of the continuous part $\chi_c(\xi)$.

\begin{lemma}\label{ll} Let $\nu\in(\lambda,1)$
be such that $g(\nu)=\eta$, that is,
$\nu$ corresponds to the critical point
$C\in(-1,1)$. Let $g(\zeta,\nu)$ be the
corresponding complex Green function. Then
\begin{equation}\label{ae0}
%\begin{split}
\frac{1}\pi\int_{-\infty}^0
\left\{\frac{1}{\xi-\zeta}-\frac{1}{\xi-1}\right\}
(\chi_c(\xi)-\bar\chi_c)d\xi=
\frac 1 2 \ln\frac{\eta-g(\zeta)}{\eta+g(\zeta)}+g(\zeta,\nu).
  %  \end{split}
\end{equation}
\end{lemma}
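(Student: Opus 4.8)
\medskip
\noindent\emph{Plan.}
The plan is to recognize both sides of \eqref{ae0} as the same Schwarz-type integral. Write $\Phi_c(\zeta)$ for the left-hand side and $\Psi(\zeta)$ for the right-hand side. By construction $\Phi_c$ is analytic and real-symmetric in $\bbC\setminus(-\infty,0]$, vanishes at $\zeta=1$ (because of the term $\tfrac{1}{\xi-1}$) and at $\zeta=\lambda$ (because of the subtracted average $\bar\chi_c$), and has boundary values $\Im\Phi_c(\xi)=\chi_c(\xi)-\bar\chi_c$ for $\xi<0$. Such a function is uniquely determined by these properties: the difference $h$ of two candidates has bounded imaginary part, equal to $0$ on $(0,\infty)$ and to a real constant on $(-\infty,0)$, whence $h(\zeta)=\tfrac{m}{\pi}\log\zeta+c$ with $m,c\in\bbR$ (principal branch), and the vanishing of $h$ at $\zeta=1$ and at $\zeta=\lambda$ forces $c=m=0$. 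So I would verify that $\Psi$ too is analytic and real-symmetric in $\bbC\setminus(-\infty,0]$, vanishes at $\zeta=1$ and $\zeta=\lambda$, and has $\Im\Psi-\chi_c$ constant on $(-\infty,0)$ --- the value of that constant then being immaterial.

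The heart of the matter is the analyticity of $\Psi$. The circles $|\zeta|=1$ and $|\zeta|=\lambda$ correspond to the components $[1,B]$, $[-A,-1]$ of $I$, on which $\Re g=\Re g(\cdot,\nu)=0$, so $g$ and $g(\cdot,\nu)$ both extend by the reflections across these circles. On the positive real axis $g$ then oscillates in $[-\eta,\eta]$, equal to $+\eta$ exactly at the points $\lambda^{2k}\nu$ and to $-\eta$ exactly at the points $\lambda^{2k}/\nu$, $k\in\bbZ$; since $\nu$ is the preimage of the saddle point $C$ of $G$ (so that $g'(\nu)=0$), $g-\eta$ has a \emph{double} zero at each $\lambda^{2k}\nu$ and $g+\eta$ a double zero at each $\lambda^{2k}/\nu$. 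Hence $\tfrac12\log(\eta-g)$ acquires a $+\log(\zeta-\lambda^{2k}\nu)$ singularity and $-\tfrac12\log(\eta+g)$ a $-\log(\zeta-\lambda^{2k}/\nu)$ singularity at these points, while $g(\cdot,\nu)$, extended by the same reflections, carries the genuine logarithmic pole $-\log(\zeta-\nu)$ at $\nu$, ``anti-poles'' $+\log(\zeta-\lambda^{2k}/\nu)$ at the images obtained by an odd number of reflections, and poles $-\log(\zeta-\lambda^{2k}\nu)$ at the others. These match and cancel, so $\Psi$ extends analytically across the whole positive real axis. Since $g$ omits the values $\pm\eta$ in the open upper half-plane (these are prong tips of the rotated and reflected copies of the comb of Fig.~\ref{fig8} onto whose interiors $g$ maps conformally), while $g(\cdot,\nu)$ is pole-free there, $\Psi$ is analytic in $\bbC\setminus(-\infty,0]$. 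Finally $g$, $g(\cdot,\nu)$ are real and $\tfrac{\eta-g}{\eta+g}>0$ on $(0,\infty)$, so $\Psi$ is real-symmetric; and $\Psi(1)=\tfrac12\log 1+g(1,\nu)=0$, $\Psi(\lambda)=\tfrac12\log 1+g(\lambda,\nu)=0$ by the chosen normalization of $g(\cdot,\nu)$.

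For the boundary values on $(-\infty,0)$ I would argue as follows. For $\xi<0$ write $g(\xi)=y+i\pi\beta$ with $y=\Re g(\xi)$ and $\beta=\pi^{-1}\Im g(\xi)\in\{\alpha,\alpha-1\}$; since $\Im\tfrac{1}{t-g(\xi)}=\tfrac{\pi\beta}{(t-y)^2+(\pi\beta)^2}$ for real $t$, the three cases of \eqref{ld6} together with \eqref{chi2} collapse into the single identity
\[
\chi_c(\xi)=\tfrac12\,\Im\!\int_{-\eta}^{\eta}\frac{dt}{\,t-g(\xi)\,}=\Im\Bigl[\tfrac12\log\tfrac{\eta-g(\xi)}{\eta+g(\xi)}\Bigr]+\mathrm{const},
\]
the additive constant (coming from $\log(-1)$) being the same throughout $(-\infty,0)$ for the branch of $\tfrac12\log\tfrac{\eta-g}{\eta+g}$ that is analytic on $\bbC\setminus(-\infty,0]$; this I would check from the fact that $\eta\pm g$ never vanishes on $(-\infty,0)$ (indeed $\Im(\eta-g)=-\pi\beta\neq0$) and that both sides are continuous at $\xi=-\mu$, where $g\to\infty$ and $\tfrac{\eta-g}{\eta+g}\to-1$. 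Moreover $\Im g(\xi,\nu)$ is constant on $(-\infty,0)$: there $z(\xi)$ runs over $\bbR\setminus[-A,B]$ and its reflections, where the harmonic conjugate of $G(\cdot,C)$ is locally constant and the pole $C$, which lies in the gap $(-1,1)$, is not encircled; in particular there is no jump at $\zeta=-\mu$ (the image of $\infty$) nor across $\zeta=-\kappa$. Hence $\Im\Psi-\chi_c$ is constant on $(-\infty,0)$, and the uniqueness from the first paragraph gives $\Psi\equiv\Phi_c$, which is \eqref{ae0}.

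The main obstacle is expected to be the analyticity step, i.e.\ showing that \emph{every} logarithmic branch point of $\tfrac12\log\tfrac{\eta-g}{\eta+g}$ is cancelled by a pole or an anti-pole of $g(\cdot,\nu)$. This rests on two points that require care: that $\nu$ is the image of the saddle $C$, so $g\mp\eta$ has a double (not a simple) zero along the orbit $\{\lambda^{2k}\nu^{\pm1}\}$; and that $g(\cdot,\nu)$, continued by the reflection group, acquires exactly the matching family of $\pm\log$ singularities at those points. Once the analyticity is secured, tracking the branch of the logarithm and the additive constants is routine, the two normalization points $\zeta=1,\lambda$ absorbing all the constants.
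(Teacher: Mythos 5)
Your proposal is correct in outline and reaches the right identity, but it is worth separating what is genuinely different from the paper and where your sketch has a concrete slip. The paper does not set up a uniqueness/boundary-value argument at all: it writes $f(\zeta)+i\pi/2=\frac12\ln\frac{\eta-g(\zeta)}{\eta+g(\zeta)}$ as a Schwarz-type integral over the \emph{whole} real line with explicit density $\rho(\xi)+\pi/2$ (equation (\ref{ae4})), writes $g(\zeta,\nu)$ as a Schwarz-type integral whose positive-ray density is exactly $-(\rho(\xi)+\pi/2)$ (equation (\ref{ae5})), and simply adds the two, so that the positive-ray contributions cancel and only the negative-ray density $\chi_c-\bar\chi_c$ survives. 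Your version repackages this as: the two sides solve the same Riemann--Hilbert problem on $\bbC\setminus(-\infty,0]$ with the same two-point normalization. The essential input is identical in both treatments --- the identity $\chi_c(\xi)=\frac12\Im\ln\frac{\eta-g(\xi)}{-\eta-g(\xi)}$ on the negative ray coming from (\ref{ld6}), and the matching $\Im\bigl[\tfrac12\ln\tfrac{\eta-g}{\eta+g}\bigr]=-\Im g(\cdot,\nu)$ on $(\lambda,1)$, keyed to the fact that $\nu$ is where $g=\eta$. Your local singularity-cancellation analysis (double zeros of $g\mp\eta$ along the orbit $\{\lambda^{2k}\nu^{\pm1}\}$ versus the poles and anti-poles of the reflected $g(\cdot,\nu)$) is a correct but heavier way of seeing what the paper reads off directly from the boundary values $\rho+\pi/2\in\{0,\pi\}$ versus $\Im g(\cdot,\nu)\in\{-\pi,0\}$; what your route buys is independence from writing down explicit integral representations for the right-hand side.

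The slip is in the verification of the normalization at $\zeta=\lambda$, which is precisely the condition that kills the $\frac{m}{\pi}\log\zeta$ ambiguity in your uniqueness step. You assert $\Psi(\lambda)=\tfrac12\log 1+g(\lambda,\nu)=0$ ``by the chosen normalization of $g(\cdot,\nu)$.'' But the paper normalizes $g(1,\nu)=0$, not $g(\lambda,\nu)=0$; in fact $\Re g(\lambda,\nu)=G(-1,C)=0$ while $\Im g(\cdot,\nu)=-\pi$ on $(\lambda,\nu)$, so $g(\lambda,\nu)=-i\pi$. Correspondingly, the branch of $\tfrac12\ln\tfrac{\eta-g}{\eta+g}$ for which (\ref{ae0}) holds (namely $f+i\pi/2$ with $\Im f=\pi/2$ on $(\lambda,\nu)$, as in (\ref{ae3})--(\ref{ae4})) takes the value $i\pi$, not $0$, at $\zeta=\lambda$. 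The two errors cancel and $\Psi(\lambda)=0$ does hold, but as written your check of the second normalization is wrong on both counts, and this is exactly the branch bookkeeping you defer as ``routine.'' You should redo that evaluation with the correct branch and the correct value of $g(\lambda,\nu)$; once that is fixed, the argument goes through.
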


\begin{proof}
Using \eqref{ld6}, for $z=y(\xi)+i\alpha=g(\xi)$,
$\xi\in (-\mu,-\lambda)$, we have
\begin{equation}\label{ae1}
\chi_c(\xi)=\frac 1 2 \Im\int_{-\eta}^{\eta}
\frac 1{t-z} dt=\frac 1 2 \Im\ln\frac{\eta-z}{-\eta-z}.
\end{equation}

\begin{figure}
  % Requires \usepackage{graphicx}
  %\includegraphics[width=]{}\\
  \begin{center}
\includegraphics[scale=.8]
{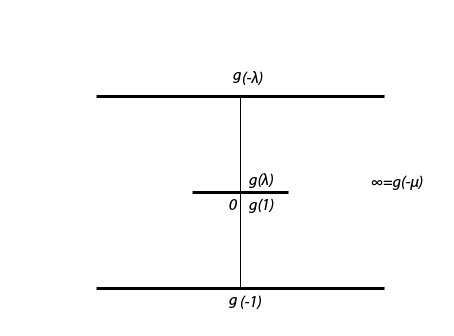}
\end{center}
  \caption{The image of the Green function}\label{fig13}
\end{figure}

So let us consider the function
\begin{equation}\label{ae2}
    f(\zeta):=\frac 1 2 \ln\frac{\eta-g(\zeta)}{-\eta-g(\zeta)}.
\end{equation}
The image of the function $g(\zeta)$ is shown in Fig. \ref{fig13}, the image of the fraction linear transformation is shown in Fig.  \ref{fig14}. Let us point out that for $\zeta$
in the upper half of the ring, Fig. \ref{fig10},
we obtain the values of $g(\zeta)$
in the right half-plane and for
$\displaystyle\frac{\eta-g(\zeta)}{-\eta-g(\zeta)}$
in the unit disk.
Thus,
\begin{equation}\label{ae3}
    \rho(\xi):=\Im f(\xi)=\begin{cases}\pi/2,& \lambda<\xi<\nu\\
    -\pi/2,&\nu<\xi<1
    \end{cases}
\end{equation}
here $\nu$ is such that $g(\nu)=\eta$.
\begin{figure}
  % Requires \usepackage{graphicx}
  %\includegraphics[width=]{}\\
  \begin{center}
\includegraphics[scale=0.8]
{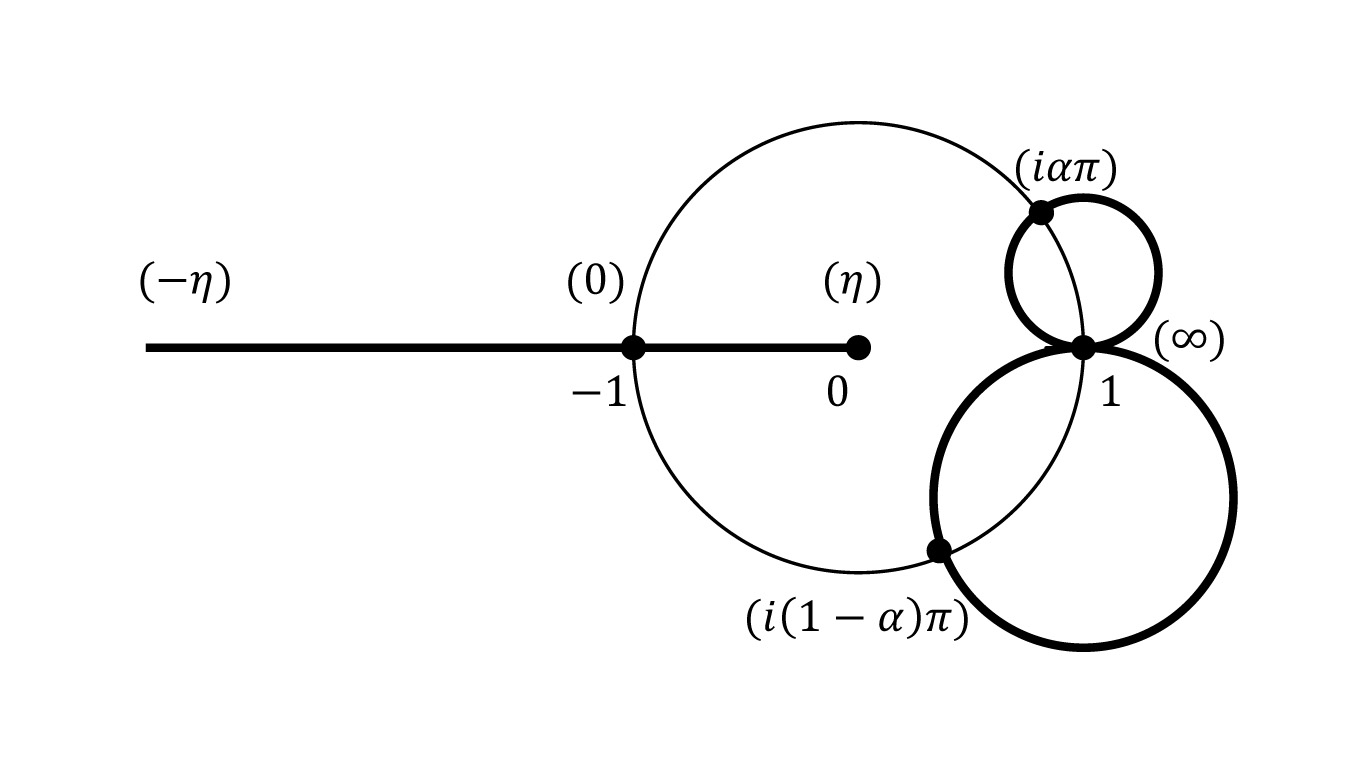}
\end{center}
  \caption{The image of $(\eta-g(\zeta))/(-\eta-g(\zeta))$}\label{fig14}
\end{figure}

We use the integral representation of $f(\zeta)+i\pi/2$
\begin{equation}\label{ae4}
   \frac 1 2 \ln\frac{\eta-g(\zeta)}{\eta+g(\zeta)}= f(\zeta)+\frac\pi 2 i=\frac 1\pi\int_{-\infty}^\infty\left\{\frac{1}{\xi-\zeta}-\frac{1}{\xi-1}\right\} \left(\rho(\xi)+\frac\pi 2\right)d\xi.
\end{equation}

The complex Green function related to the critical point $C\in (-1,1)$ we still normalized by the condition
$g(1,\nu)=0$.
Therefore
$$\Im g(\xi,\nu)=
\left\{\begin{array}{ll}-\pi,&\xi\in(\lambda,\nu),\\
-\pi(1-\omega(C)),&\xi<0,\\
0,&\xi\in(\nu,1).
\end{array}\right.$$
According to \eqref{ae3} we can represent
$g(\zeta,\nu)$ as
\begin{equation}\label{ae5}
\begin{split}
 g(\zeta,\nu)=&-(1-\omega(C))
 \int_{-\infty}^0\left\{\frac{1}{\xi-\zeta}-\frac{1}{\xi-1}\right\}d\xi\\
 -&
 \frac 1\pi\int_{0}^\infty\left\{\frac{1}{\xi-\zeta}-\frac{1}{\xi-1}\right\}\left(\rho(\xi)+\frac\pi 2\right)d\xi.
 \end{split}
\end{equation}

Recall that on the negative ray $\rho(\xi)=\chi_c(\xi)$, see (\ref{ae1}).
Adding \eqref{ae5} and \eqref{ae4}
we obtain \eqref{ae0},
moreover
\begin{equation}\label{newxxx}
\bar \chi_c=\pi(\frac 1 2 -\omega(C)).
\end{equation}
\end{proof}
Theorem \ref{th7.1} follows from Lemma~\ref{ll}, (\ref{efu12}) and
(\ref{jump3}).
\vspace{.1in}

\noindent
{\em Completion of the proof of Theorem~1.1}.
\vspace{.1in}

The error term $L_n$ satisfies
\begin{equation}\label{12a}
L_n\sim \sqrt{2/\pi}\, a_n^{-{1/2}}e^{-a_n},
\end{equation}
where $a_n=\max\{\theta_n(\xi):\lambda<\xi<1\}.$
This follows from (\ref{asmain0}) and our explicit
representation of the extremal polynomial (\ref{twin2}).

The necessary constants $C,\eta,\eta_1,\eta_2,\alpha$ which depend only on
$A$ and $B$,
and the harmonic measure $\omega(x)=\omega(x,[-A,-1],\bC\backslash I)$
were defined in the Introduction.

According to (\ref{newxx}), (\ref{newxxx}) and (\ref{chi2}) we have
\begin{equation}\label{chi22}
\frac{\bar \chi} \pi=\begin{cases}
1/2 -\omega(C)+\omega(D), &-\kappa<-\mu,\\
-1/2 -\omega(C)+\omega(D), &-\mu<-\kappa.
\end{cases}
\end{equation}
Notice that $\omega$ is a strictly increasing function on
$\bbR\backslash(-A,B)$ and the image of this set (together with the
infinite point) equals $[0,1]$. Therefore,
for every $n$ there exists a unique solution $D_n$ of the
equation
\begin{equation}\label{12c}
\omega(D_n)=\{\alpha n+\omega(C)\},
\end{equation}
where $\{\cdot\}$ stands for the fractional part.

Equations (\ref{efu13}), (\ref{chi22}) and
$d_{n_l}\to d$ imply that
\begin{equation}\label{newxxxx}
\omega(D_{n_l})\to \omega(D).
\end{equation}
Let $-\kappa_{n}\in(-1,-\lambda)$ be the point in $\zeta$-plane
(see Fig.~8) which
corresponds to $D_{n}$ in $z$-plane. Then $g(\zeta,-\kappa_{n_l})
\to g(\zeta,-\kappa)$.
Now (\ref{result}) implies
\begin{equation}\label{newstar}
\lim_{n_l\to\infty}(\theta_{n_l}(\zeta)-n_lg(\zeta)+g(\zeta,-\kappa_{n_l}))=
\frac{1}{2}\ln\frac{\eta-g(\zeta)}{\eta+g(\zeta)}+g(\zeta,\nu).
\end{equation}
The right hand side is independent of
the subsequence $\{ n_l\}$, so the limit as $n\to\infty$ exists
in the left hand side. In the resulting formula we let $\zeta\to\nu$
and obtain
\begin{equation}\label{newbull}
\lim_{n\to\infty}(\theta_n(\nu)-ng(\nu)+g(\nu,-\kappa_n))=
-\frac{1}{2}\ln\frac{2\eta}{\eta_1}+\eta_2.
\end{equation}
The functions $g(\zeta,-\kappa_n)$
are uniformly bounded and
have bounded derivatives on $(\lambda,1)$. Therefore,
$$a_n=\max\{\theta_n(\xi):\lambda<\xi<1\}=\theta_n(\nu)+o(1),\quad
n\to\infty.$$
Thus
\begin{equation}\label{12b}
a_n=\eta n-G(D_n,C)-\frac{1}{2}\ln\frac{2\eta}{\eta_1}+\eta_2+o(1).
\end{equation}
To obtain the final result, this expression for $a_n$ has to be
substituted to (\ref{12a}).

We can simplify the expression $e^{G(D_n,C)}$ in the resulting
formula and avoid solving
equation (\ref{12c}) in the following way.

Let $F$ be the conformal map of the upper half-plane onto a rectangle
$(0,p,p+i,i)$, where $p>0$ and the vertices of the rectangle
correspond to $(1,B,-A,-1)$ in this order.
It is easy to see that $\omega=\Im F$. So
\begin{equation}\label{phiC}
F(C)=i\omega(C),
\end{equation}
and in view of (\ref{12c})
\begin{equation}
\label{phiD}
F(D_n)=p+i\omega(D_n)=p+i\{\alpha n+\omega(C)\}.
\end{equation}
Christoffel--Schwarz formula gives (\ref{harmeasure}), and $p=\tau/i,$
where $\tau$ is defined by (\ref{aaa}). We reflect our
rectangle with respect to the imaginary axis and apply
the map $z\mapsto (i\pi/p)z$, to obtain the
new rectangle
$$R=\{ x+iy:-\pi/p<x<0,\;|y|<\pi\}.$$
Then $e^z$ maps this rectangle $R$ into a ring $e^{-\pi/p}<|w|<1$
and we use the expression of the Green function of
this ring \cite[\S55 (4)]{A3} substituting to this formula\footnote{
In the English edition of 1990, this formula contains two
misprints: an extra vertical line and
missing subscript $1$ in the theta-function in the denominator.}
$\ln w=i\pi-(\pi/p)\omega(D)$, $\ln c=(\pi/p)\omega(C)$
and using $\tau$ instead of $-1/\tau$.
The result is simplified using Table VIII in \cite{A3} and we obtain
$$\displaystyle e^{G(D_n,C)}=
\left|
\frac{\vartheta_0\left(\frac{1}{2}(\{ n\omega(\infty)+\omega(C)\}
-\omega(C))|\,\tau\right)}
{\vartheta_0\left(\frac{1}{2}(\{ n\omega(\infty)+\omega(C)\}
+\omega(C))|\,\tau\right)}\right|,
$$
where $\tau=ip$ is given by (\ref{aaa}).
Combining this with (\ref{12a}) and (\ref{12b}) we obtain the statement
of Theorem~1.1.

\section{Example (the symmetric case)}
We consider the case $I=[-A,-1]\cup[1,A]$. In this case
\begin{equation}\label{ex1}
G(z,\infty)=\int_{A}^z\frac{xdx}{\sqrt{(x^2-1)(x^2-A^2)}}.
\end{equation}
Therefore
\begin{equation}\label{ex2}
\eta=\int_0^1\frac{xdx}{\sqrt{(x^2-1)(x^2-A^2)}}=
\frac 1 2\int_1^{\frac{A^2+1}{A^2-1}}\frac{dt}{\sqrt{t^2-1}}=
\frac 1 2\ln\frac{A+1}{A-1}
\end{equation}
and
\begin{equation}\label{ex3}
\eta_1=-\frac 1  2 G''(0,\infty)=\frac 1 {2A}.
\end{equation}
Also,
\begin{equation}\label{ex4}
G(z,0)=\int_{-1}^z\frac{Adx}{x\sqrt{(x^2-1)(x^2-A^2)}}
\sim\ln \frac 1 z+\ln\frac{2A}{\sqrt{A^2-1}}.
\end{equation}
Notice that $\omega(\infty)=1/2$, $C=0$ and $\omega(C)=1/2$.
Therefore for $n=2m+2$ we have $D_n=\infty$, so $L_{2m+2}=L_{2m+1}$.

For $n=2m+1$ we get
\begin{equation}\label{ex5}
\begin{split}
\sqrt{(2m+1)\eta}\sqrt{\frac{\eta_1}{2\eta}}
e^{(2m+1)\eta+\eta_2}L_{2m+1}\\ \\=
\sqrt{\frac{2m+1}{4A}}\left(\frac{A+1}{A-1}\right)^m\sqrt{\frac{A+1}{A-1}}\frac{2A}{\sqrt{A^2-1}}L_{2m+1}.
\end{split}
\end{equation}
Finally,
\begin{equation}\label{ex51}
\lim_{m\to\infty}
\sqrt{2m+1}\left(\frac{A+1}{A-1}\right)^m
\frac{\sqrt{A}}{A-1}L_{2m+1}=\sqrt{\frac 2{\pi}},
\end{equation}
as we proved in \cite{EY}.

\section{Approximation of $\sgn(x)$ by entire functions
on $[-A,-1]\cup[1,+\infty)$}

Only a minor variation of our method is needed to investigate
the following problem: {\em minimize
\begin{equation}\label{mmm}
\sup\{|f(x)-\sgn(x)|:x\in[-A,-1]\cup[1,+\infty)\}
\end{equation}
among all entire functions $f$ of order $1/2$, type $\sigma$.}

Let $E(\sigma)$ be the infimum (\ref{mmm}). It is easy to prove the
existence of an extremal function using normal families arguments.

Now we describe a construction of extremal functions.
We take the
error $E$ as an independent parameter.
Let $a>0$ be the unique solution of the equation
$L(a)=E$, where
$L(a)$ is defined in the beginning of Section~2.
For $h\geq 0$, and an integer $k\geq 2$, let $\Pi_k(h)=\Pi^-_{k,\infty}(h)$,
that is
the region
in the upper half-plane  whose boundary with respect to the
upper half-plane consists of the segment $[0,ia]$,
and the curve $\delta_{-k}$
as in (\ref{ddd}). Let $\Theta_{k,h}:\bbC_+\to\Pi_k(h)$ be the conformal
map normalized by
$\Theta_{k,h}(\pm1)=0,\Theta_{k,h}(\infty)=\infty$.

\begin{proposition}\label{proa}
If $h=0$ then $S(\Theta_{k,0},a)$ is the unique extremal
function for
\begin{equation}\label{jjj}
A\in[\Theta^{-1}_{k,0}(-c_{k}),\Theta^{-1}_{k,0}(-c_{k-1})].
\end{equation}
If $h>0$ then $S(\Theta_{k,h},a)$ is the unique
extremal function for $$A=\Theta_{k,h}(-c_{k-1}+0).$$
\end{proposition}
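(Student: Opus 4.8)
\noindent\emph{Proof proposal.}

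The plan is to reproduce, in the entire-function setting, the logic behind Proposition~3.1: show that each $f_{k,h,a}:=S(\Theta_{k,h},a)$ lies in the admissible class and equioscillates correctly, hence is extremal, and conversely that every extremal function has this form; the novelty, as announced, is that the converse must be argued \emph{without} counting critical points and alternance points. Existence of an extremal $f$ of prescribed type $\sigma$ is granted, and uniqueness I would deduce from the Chebyshev-type characterization (alternation theorem) for best approximation in the class $\cE_\sigma$ of entire functions of order $1/2$ and type $\le\sigma$, which on each of $(-A,-1)$ and $(1,\infty)$ plays the role of a Chebyshev system for this problem.

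First I would settle class membership and correct equioscillation for $f_{k,h,a}$. It is real on $\bbR$, hence extends to an entire function by the symmetry principle; and since $\Pi_k(h)=\Pi^-_{k,\infty}(h)$ is bounded on the left by $\delta_{-k}$, open to the right, and thus subtends the angle $\pi/2$ at infinity, one gets $\Theta_{k,h}(z)\sim\sigma z^{1/2}$ for a constant $\sigma=\sigma(k,h,a)>0$; combined with $\tilde\psi(w)\sim w$ this makes $f_{k,h,a}(z)=1-L(a)\cos\tilde\psi\bigl(\sqrt{\Theta_{k,h}(z)^2+a^2}\,\bigr)$ of order $1/2$ and type exactly $\sigma$ — the entire-function substitute for ``$P$ is a polynomial of degree $k_1+k_2-1$'', which fixes the correspondence $(k,h,a)\mapsto\sigma$. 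By construction $f_{k,h,a}^{-1}(\bbR)\cap\bbC_+$ is the $\Theta_{k,h}$-preimage of the boundary curves of $\Pi_k(h)$, so all real critical points lie in $[-A,-1]\cup[1,\infty)$, with critical values $1\pm L(a)$ on $[1,\infty)$ and $-1\pm L(a)$ on $[-A,-1]$, save, when $h>0$, the extreme left one, which is $-1\pm L(a)\cosh h$, while $\pm1$ are non-critical alternance points. This produces an infinite alternating sequence of points of $I$ where $|f_{k,h,a}-\sgn|=L(a)$ — the extremal signature — so $f_{k,h,a}$ is the unique best approximation with error $E=L(a)$. When $h=0$ there is one surplus alternance point on the left and $-A$ may slide between the two leftmost critical points of $f_{k,0}$ without disturbing the signature, which gives the whole interval \eqref{jjj}; when $h>0$ the slit tip pins $-A$ down to the stated value. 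This is the verbatim analogue of the three examples of Section~2.

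For the converse, let $f$ be extremal for some $\sigma$ and $A$ with error $E$, and fix $a$ by $L(a)=E$. Since $f$ equioscillates, its alternance critical points are real; passing $(1-f)/L(a)$ through $\arccos$ and $\tilde\psi^{-1}$ and then undoing the substitution $w\mapsto\sqrt{w^2+a^2}$, one sees that $f^{-1}(\bbR)\cap\bbC_+$ is a comb of analytic arcs, that $f$ has no further critical points, real or complex, and that this comb, carried to the $\sqrt{w^2+a^2}$-plane, fills a simply connected region bounded by the segment $[0,ia]$ together with arcs of the family $\{\delta_j\}$. The order-$1/2$, type-$\sigma$ growth of $f$ is precisely what forces this region to have finitely many upward teeth on the right and to subtend the angle $\pi/2$ at infinity, i.e.\ to be exactly some $\Pi^-_{k,\infty}(h)$ rather than a finite comb $\Pi_{k_1,k_2}$ or an infinite comb of a different shape. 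Reading off the conformal map gives $f=S(\Theta_{k,h},a)$ up to an affine change of variable, and — exactly as at the end of the proof of Proposition~3.1 — the fact that $\pm1$ are non-critical alternance points forces that map to be the identity. Finally I would close the loop with a monotonicity/continuity argument in the spirit of the Remarks after Proposition~3.1: for fixed $k$, along the locus $\sigma(k,h,a)=\sigma$ the value of $A$ varies continuously and monotonically, and the degenerations $h\to0$ and $h\to\infty$ glue $\Pi^-_{k,\infty}$ to the neighbouring indices, so that the attainable $A$'s exhaust $(1,\infty)$.

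I expect the crux to be this converse step: showing rigorously, without the critical-point count that was available for polynomials, that the real preimage of an \emph{a priori} arbitrary extremal member of $\cE_\sigma$ has the combinatorial shape of some $\Pi^-_{k,\infty}(h)$ — no spurious branches, no complex critical points, the right behaviour at infinity. The finiteness of the type is the lever here (it caps the number of teeth and fixes the angular opening $\pi/2$), and turning that into a clean topological statement about $f^{-1}(\bbR)$ is the main obstacle; everything else runs parallel to Sections~2–3.
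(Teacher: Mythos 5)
Your proposal has a genuine gap exactly at the point where you write that the infinite equioscillation of $f_{k,h,a}$ on $[-A,-1]\cup[1,+\infty)$ is ``the extremal signature --- so $f_{k,h,a}$ is the unique best approximation.'' For polynomials on a compact set that ``so'' is Chebyshev's alternation theorem, but for entire functions of order $1/2$ and prescribed type on an unbounded set there is no off-the-shelf sufficiency theorem of this kind, and supplying one is the entire content of the paper's proof of this proposition. The paper argues as follows: if $g$ is any real competitor of the same order and type, the alternance points $x_1<x_2<\ldots\to+\infty$ of $f=S(\Theta_{k,h},a)$ force an interlaced sequence $\{y_k\}$ with $f(y_k)=g(y_k)$; the product $F(z)=\prod_k\frac{1-z/x_k}{1-z/y_k}$ has imaginary part of constant sign in each half-plane and hence satisfies $F(re^{it})=O(r)$ away from the positive ray; writing $(f-g)/f'=P(z)/((z-c)F(z))$, where $c$ is the exceptional critical point (omitted if absent), and bounding the left-hand side for $|\Im z|>1$ by Phragm\'en--Lindel\"of (since $f-g$ has zero type off $\bbR_+$ while $f'$ has indicator $\sigma\sin(t/2)$) shows that $P$ is a polynomial with $P(z)/(z-c)=O(z)$, resp.\ $P(z)=O(z)$; since $P$ vanishes at the three, resp.\ two, non-critical alternance points $-A,-1,1$, it is identically zero and $g=f$. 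Nothing in your proposal supplies a substitute for this interlacing-plus-product argument; citing an ``alternation theorem for the class of entire functions of order $1/2$, type $\sigma$'' assumes the conclusion.

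A secondary issue: most of your effort goes into the converse classification --- that an a priori arbitrary extremal function must have the form $S(\Theta_{k,h},a)$ --- which is not what this proposition asserts; that is the content of the companion proposition (existence of $k,h,a$ for every $E$ and $A$), which the paper proves by the monotonicity and continuity argument you only sketch at the very end, not by analyzing $f^{-1}(\bbR)$. Even there you explicitly leave the crux open (``turning that into a clean topological statement about $f^{-1}(\bbR)$ is the main obstacle''). So as written the proposal establishes neither the statement at hand nor the converse it spends most of its time on.
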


The proof of this theorem is similar to the proof of Theorem~3 in \cite{EY}.
We recall the argument for the reader's convenience.

\begin{proof}
Let $f(z)=S(\Theta(z),a)$. Let $x_1<x_2,\ldots\to+\infty$ be the sequence
of all alternance points. Let $\sigma>0$ be the type of $f$
with respect to the order $1/2$. Let $g$ be an entire function of the same
type $\sigma$, order $1/2$. Without loss of generality we may
assume that $g$ is real. Then there exists a sequence $\{ y_k\}$
{\em interlaced} with $\{ x_k\}$, that is,
$$x_1\leq y_1\leq x_2\leq y_2\leq\ldots,$$
such that $f(y_k)=g(y_k)$.
Consider the product
$$F(z)=\prod_{k=1}^\infty\frac{1-z/x_k}{1-z/y_k}.$$
This product converges uniformly on compact subsets of the plane
and has imaginary part of a constant sign in the upper half-plane
and of the opposite sign in the lower half-plane \cite[VII, Thm1]{Lev}.
This implies that
\begin{equation}\label{etype}
F(re^{it})=O(r),\quad r\to\infty
\end{equation} uniformly with respect to $t$ in $\epsilon<t<2\pi-\epsilon$,
for every $\epsilon>0$.
As $f(y_k)=g(y_k)$, we have
\begin{equation}\label{deviat}
\frac{f(z)-g(z)}{f'(z)}=\frac{P(z)}{(z-c)F(z)},
\end{equation}
where $c$ is the critical point of $f$ which is outside the set
$[-A,-1]\cup[1,\infty)$. If there is no such point $c$, then the factor
$(z-c)$ has to be omitted in (\ref{deviat}).
Then $P$ is an entire function of order $1/2$.

Now we notice that the left hand side of (\ref{deviat}) is bounded for
$|\Im z|>1$. Indeed, $g$ and $f-g$ are at most of type $\sigma$, order
$1/2$, while $f'$ has indicator $\sigma\sin(t/2),\; 0<t<2\pi$,
so the ratio has zero type in $\bbC\backslash\bbR_+$ and thus
this ratio is bounded by the Phragm\'en--Lindel\"of theorem.

Combining this with (\ref{etype}) we conclude that $P$ is
a polynomial, and
\begin{equation}\label{bbla}
P(z)/(z-c)=O(z),\; z\to\infty,
\end{equation}
if the
point $c$ exists, and
\begin{equation}\label{bblb}
P(z)=O(z),\; z\to\infty,
\end{equation} if the point
$c$ does not exist.

On the other hand, $P(x)=0$ for each non-critical alternance point $x$.
From our construction of $f=S(\Theta,a)$ it follows that
when $c$ is present, then there are $3$ non-critical alternance points,
namely $-A,-1,1$, while when $c$ is absent, then there are at least
$2$ non-critical alternance points, namely $-1,1$. Together with
(\ref{bbla}), (\ref{bblb}) this implies that $P=0$, that is, $f=g$.
\end{proof}

\begin{proposition}\label{prob}
For every $E$ and $A$ there exist $k,h$ and $a$ such that $S(\Theta_{k,h},a)$
is an extremal function for the set $[-A,-1]\cup[1,+\infty)$.
\end{proposition}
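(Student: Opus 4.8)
The plan is to parallel the proof of Proposition 3.1 (the polynomial case), replacing ``degree of a polynomial'' by ``type with respect to order $1/2$'' and replacing Chebyshev's alternance theorem by its analogue for entire functions of order $1/2$, normal type. First I would fix $E$ and $A$ and let $f$ be an extremal function for $[-A,-1]\cup[1,+\infty)$, which exists by the normal families argument mentioned just before Proposition~9.1; let $\sigma$ be its type. The first task is to record the alternance-type characterization of such an extremal $f$: the error function $Q(x)=f(x)-\sgn(x)$ attains its maximal absolute value $E$ with alternating signs on an infinite increasing sequence $x_1<x_2<\dots\to+\infty$ of points in the set, and moreover the ``density'' of these alternance points is governed by the type $\sigma$ (this is the analogue, for order $1/2$, of the statement that a degree-$n$ polynomial has $n+2$ alternance points; it is exactly the kind of fact used implicitly in \cite{EY} and made rigorous there).

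Next I would argue, exactly as in the three-case analysis of Proposition~3.1, that all critical points of $f$ in the finite part of the plane are real and simple, that every critical value equals $-1\pm L(a)$ on the negative ray and $1\pm L(a)$ on the positive ray with at most one exception, that the exceptional critical value (if present) belongs to an \emph{extreme} critical point, and that $\pm1$ are always non-critical alternance points. The counting here is cleaner than in the polynomial case: on $[1,+\infty)$ the graph of $f$ must oscillate infinitely often between $1-L(a)$ and $1+L(a)$, forcing infinitely many critical points on $[1,\infty)$ with the standard critical-value pattern, so the only freedom is near the left endpoint $-A$ — that is, finitely many critical points $-c_1<\dots<-c_{k-1}$ on $[-A,-1]$ (with the leftmost possibly carrying a modified value $-1-L(a)\cosh h$, $h\geq 0$) plus possibly one exceptional critical point to the left of $-A$. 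This is precisely the combinatorial data encoded by the region $\Pi_k(h)=\Pi^-_{k,\infty}(h)$.

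Then I would invoke the rigidity lemma for entire functions with all critical points real and simple (the order-$1/2$ analogue of the McLane--Vinberg fact quoted in the proof of Proposition~3.1, and referenced there via \cite{mclane,vinberg}): two such functions with matching critical-value sequences differ by an affine change of variable. Applying this with $S(\Theta_{k,h},a)$, whose critical-value pattern matches that of $f$ for the appropriate choice of $k$, $h$ and the parameter $a$ determined by $L(a)=E$, gives $f(z)=S(\Theta_{k,h}(cz+b),a)$ for some $c>0$, $b\in\bbR$; the normalization that $\pm1$ are non-critical alternance points forces $c=1$, $b=0$. Finally, by Proposition~\ref{proa} this $S(\Theta_{k,h},a)$ is itself extremal for the set $[\Theta^{-1}_{k,0}(-c_k),\Theta^{-1}_{k,0}(-c_{k-1})]$ when $h=0$, or for $A=\Theta_{k,h}(-c_{k-1}+0)$ when $h>0$; since as $k$ ranges over integers $\geq 2$ and $h$ over $[0,\infty)$ these sets exhaust all $A>1$ (by the monotonicity and limiting behavior of $A$ as a function of $(k,h)$, analogous to the Remarks following Proposition~3.1), we recover exactly our given $A$.

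The main obstacle is the very first step: establishing the correct alternance characterization for the extremal entire function of order $1/2$ and, in particular, the link between the \emph{type} $\sigma$ and the asymptotic density of alternance points, which is what plays the role of the inequality $m\geq n+2$ in the polynomial case. This is where one must be careful — one needs a Chebyshev-type theorem valid in the class of order-$1/2$ functions of a fixed type, and the compactness/extremality argument has to be combined with a Phragm\'en--Lindel\"of estimate (as in the proof of Proposition~\ref{proa}) to control the growth. Since the paper explicitly says this section is only a sketch and refers to \cite{EY} for the analogous arguments, I would present this step as a citation to \cite{EY} together with the remark that the passage from two symmetric intervals to $[-A,-1]\cup[1,\infty)$ requires only notational changes, and devote the written proof mainly to the case analysis and the rigidity argument, which are the genuinely new (though routine) parts.
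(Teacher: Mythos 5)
Your main line of attack --- transplant the alternance/critical-point counting of Proposition~3.1 to entire functions of order $1/2$ --- is precisely the route the authors explicitly disclaim: right after stating Proposition~3.1 they write that the counting proof ``does not extend to the case of entire functions, so in Section~9 we will give another proof which is less elementary but avoids counting.'' The obstruction is the one you yourself flag and then defer to a citation: the whole polynomial argument hinges on the finite count $n+2\leq m=K+N\leq n+3$, where $K\leq n-1$ comes from the degree bound on the number of critical points. For an entire function of order $1/2$ and type $\sigma$ on $[-A,-1]\cup[1,+\infty)$ both the critical points and the alternance points are infinite in number, and there is no off-the-shelf Chebyshev-type theorem linking the type $\sigma$ to a density of alternance points that would let you reduce to the three cases a), b), c). Citing \cite{EY} does not close this, since \cite{EY} treats the symmetric situation and the present paper's stated reason for the detour in Section~9 is exactly that this step is unavailable. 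So as written, the centrepiece of your proof (the case analysis plus the MacLane--Vinberg rigidity step) rests on an unproved and genuinely nontrivial claim.

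The irony is that the last step of your final paragraph already contains the paper's entire proof, and it makes everything before it unnecessary. The paper does not analyze the extremal function at all: it fixes $a$ by $L(a)=E$, orders the pairs $(k,h)$ by $(k,h)\prec(k',h')$ iff $k<k'$, or $k=k'$ and $h>h'$ (so the parameter set is order-isomorphic to a ray), and observes that $(k,h)\mapsto A$ is monotone increasing, continuous for $h\neq 0$, with jumps at $(k,0)$ that are exactly filled by the interval in \eqref{jjj}. Hence every $A>1$ is realized by some $(k,h)$, and Proposition~\ref{proa} --- proved separately by the interlacing-product and Phragm\'en--Lindel\"of argument --- certifies that the corresponding $S(\Theta_{k,h},a)$ is extremal for that $A$. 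Since Proposition~\ref{prob} only asserts \emph{existence} of suitable $k,h,a$, this forward construction suffices; the claim that \emph{every} extremal function arises this way is deferred to Theorem~9.3, where it follows from monotonicity of $E\mapsto\sigma(A,E)$ rather than from any counting. I would restructure your proof to consist only of that final monotonicity-and-coverage argument.
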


\begin{proof}
For given $E$ we can choose $a$ such that $L(a)=E$.
To prove the existence of $k$ and $h$,
we use a monotonicity argument as in Remarks in Section~3.
Namely, we introduce the following order relation on the pairs
$(k,h)$: $(k,h)\prec(k',h')$ if $k<k'$ or $k=k'$ and $h>h'$.
With this order, the set of pairs $(k,h)$ becomes isomorphic
to the positive ray,
and the correspondence $(k,h)\mapsto A$ becomes
monotone increasing. This function is continuous for $h\neq 0$ and
has a jump at each point $(k,0)$ (this jump is seen in the
right hand side of (\ref{jjj})). So we can obtain any $A>1$
from some pair $(k,h)$.
\end{proof}

\begin{theorem} For every $A$ and $\sigma$, there exists a unique
extremal function $f$ of type $\sigma$, and $f=S(\Theta_{k,h},a)$
for some positive integer $k$, $h\geq 0$ and $a>0$.
\end{theorem}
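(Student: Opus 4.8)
The plan is to combine the two preceding propositions. Proposition~\ref{prob} provides, for given $A$ and $\sigma$, at least one extremal function of the form $f=S(\Theta_{k,h},a)$; indeed, one first fixes $a$ so that $L(a)=\sigma$-admissible parameters match the given type, but more precisely one should think of $a$ (equivalently $E$) as a free parameter and of $\sigma=\sigma(k,h,a)$ as the resulting type. So the remaining task is twofold: (i) promote the ``for every $E$ and $A$'' statement of Proposition~\ref{prob} to ``for every $\sigma$ and $A$'', and (ii) establish uniqueness of the extremal function for given $A$ and $\sigma$.

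For (i), I would argue as follows. Fix $A>1$. By Proposition~\ref{prob}, for each error value $E>0$ there is a triple $(k(E),h(E),a(E))$ with $L(a(E))=E$ such that $f_E:=S(\Theta_{k(E),h(E)},a(E))$ is extremal for $[-A,-1]\cup[1,+\infty)$ with error $E$; let $\sigma(E)$ be its type. As $E\to 0$ the approximation gets better, which forces $a(E)\to\infty$ and hence $\sigma(E)\to\infty$ (the type is comparable to the exponential type of $\tilde S(\cdot,a)$, which is~$1$ after the $\sqrt{z^2+a^2}$ substitution, rescaled by the derivative of $\Theta$ at infinity; as the comb region grows, the conformal radius forces $\sigma\to\infty$). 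As $E\to\infty$ (more precisely as $E$ increases toward the trivial bound) the type $\sigma(E)\to 0$. Since $E\mapsto(k(E),h(E))$ runs monotonically through the ordered set of pairs and the construction depends continuously on $(h,a)$ with controlled jumps only in $A$ (not in $\sigma$) as $k$ increments, $\sigma(E)$ is a continuous strictly monotone function of $E$ onto $(0,\infty)$. Hence every $\sigma>0$ is attained. This is the step I expect to require the most care: one must check that the type genuinely varies continuously and surjectively, i.e.\ rule out that incrementing $k$ causes a jump in $\sigma$, and control the asymptotics of $\sigma$ as $a\to\infty$ and $a\to 0$.

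For (ii), uniqueness, I would invoke Proposition~\ref{proa} together with a Chebyshev-type alternance characterization for entire functions of order $1/2$, normal type. The functions $S(\Theta_{k,h},a)$ were shown in Proposition~\ref{proa} to be the \emph{unique} extremal functions for the corresponding $A$; the argument there, via the interpolation/deviation identity \eqref{deviat} and the Phragm\'en--Lindel\"of bound, shows that any competitor $g$ of the same type with $|g-\sgn|\le E$ on the set must coincide with $f$. Thus once we know \emph{some} $f=S(\Theta_{k,h},a)$ is extremal for $(A,\sigma)$, that same computation shows it is the only extremal function: the non-critical alternance points ($-A,-1,1$ when the exceptional critical point $c$ is present, or $-1,1$ when it is absent) force the polynomial $P$ in \eqref{deviat} to vanish identically, whence $g=f$. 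So uniqueness is essentially already contained in Proposition~\ref{proa} and needs only to be restated for the given $(A,\sigma)$ rather than for the $A$ produced by a given $(k,h,a)$.

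Assembling: given $A$ and $\sigma$, part (i) yields a triple $(k,h,a)$ with $S(\Theta_{k,h},a)$ extremal of type $\sigma$; part (ii) shows no other entire function of type $\sigma$ attains the infimum $E(\sigma)$. The main obstacle, as noted, is the continuity-and-surjectivity claim for $\sigma$ as a function of the construction parameters; the rest is bookkeeping on top of Propositions~\ref{proa} and~\ref{prob}.
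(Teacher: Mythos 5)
You follow essentially the same route as the paper: existence via an intermediate-value argument on the type $\sigma(A,E)$ of the extremal function supplied by Proposition~\ref{prob}, and uniqueness via the alternance/Phragm\'en--Lindel\"of argument of Proposition~\ref{proa}. The one point where the paper is cleaner is the strict monotonicity of $E\mapsto\sigma(A,E)$: rather than tracking the parameters $(k,h,a)$ as you attempt, it deduces this directly from the uniqueness in Proposition~\ref{proa} (if $E_1<E_2$ but $\sigma(A,E_1)\le\sigma(A,E_2)$, the $E_1$-extremal function would beat the $E_2$-extremal one in the type-$\sigma(A,E_2)$ problem), and then simply records continuity and the limit values $\sigma(A,1)=0$, $\sigma(A,0+)=+\infty$ --- precisely the step you correctly single out as the delicate one.
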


\begin{proof} Let $\sigma(A,E)$ be the type (with respect to order $1/2$)
of the extremal function defined in Proposition~\ref{prob}. Then
Proposition~\ref{proa} implies
that for every $A$, the function $E\mapsto\sigma(A,E)$ is strictly
decreasing. It is easy to check that
$\sigma(A,1)=0$ and $\sigma(A,0+)=+\infty$.
Moreover, $E\mapsto\sigma(A,E)$ is continuous. So there is
a unique $E=E(A,\sigma)$, which is the error of the best approximation
for given $A$ and $\sigma$, and from this $E$ and $A$ we define $k$ and $h$
using Proposition~\ref{prob}.
\end{proof}
\vspace{.1in}

To state the asymptotic result, we introduce the Martin function
$M(x)$ of the region $\bbC\backslash I$, where
$I=[-A,-1]\cup[1,+\infty)$,
replacing the Green function which we used before.
Martin's function is characterized by the properties that it is
positive and harmonic in $\bbC\backslash I$, equals zero on $I$ and
has asymptotic behavior $$M(-x)\sim\sqrt{x},\; x\to+\infty.$$
We have $M(z)=\Im\cM(z)$ where $\cM$ is the conformal map
of the upper half-plane onto the region
$$\{ x+iy:x>-\pi\alpha,\; y>0\}\backslash[0,i\eta],$$
such that
$$\cM(\pm1)=0,\; \cM(-A)=-\pi\alpha,\;
\cM(-x)\sim \sqrt{x},\; x\to+\infty.$$
These relations define $\alpha$ and $\eta$ uniquely.

Martin's function
has a single critical point $C\in(-1,1)$ and we use the notation
$\eta=M(C)$ and $\eta_1=-M^{\prime\prime}(C)/2,$ as before.
The Green function $G(x,C)$ satisfies
$$G(x,C)=-\ln|x-C|+\eta_2+O(x-C),\quad x\to C,$$
and this defines $\eta_2$.
We also introduce the harmonic measure
$\omega(z)=\omega(z,[-A,-1],\bbC\backslash I)$.
Then $\omega(x)$ is continuous and strictly increasing on $[-\infty,-A)$,
and maps this ray onto $[0,1)$, so
the equation
$$\omega(D_\sigma)=\left\{\alpha\sigma+\omega(C)\right\},$$
where $\{ x\}$ is the fractional part of $x$,
has a unique solution for every $\sigma>0$.

\begin{theorem}
The error of the best uniform approximation of the function
$\sgn(x)$ on $[-A,-1]\cup[1,+\infty)$ by entire functions of
order $1/2$, type $\sigma$ satisfies
$$E(\sigma)\sim \sqrt{\frac{2}{\pi}}(a(\sigma))^{-1/2}e^{-a(\sigma)},$$
where
\begin{equation}\label{llast}
a(\sigma)=\eta\sigma-G(D_\sigma,C)-\frac{1}{2}\ln\frac{2\eta}{\eta_1}+\eta_2.
\end{equation}
\end{theorem}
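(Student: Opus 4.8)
The plan is to transcribe, almost line by line, the argument of Sections~4--7, with the degree $n$ replaced by the type $\sigma$, the Green function $G(z,\infty)=\Im\Phi(z)$ replaced by the Martin function $M(z)=\Im\cM(z)$, and the two--sided comb regions $\Pi^{\pm}_{k_1,k_2}(h)$ replaced by the one--sided regions $\Pi_k(h)=\Pi^{-}_{k,\infty}(h)$. The first step is a reduction. By Propositions~\ref{proa}, \ref{prob} and the uniqueness theorem just established, the extremal function of type $\sigma$ is $f_\sigma(z)=S(\Theta_{k(\sigma),h(\sigma)}(z),a_\sigma)$ for a uniquely determined triple, and --- exactly as in Section~3 --- the only critical value of $S(\Theta_{k,h},a)$ differing from $\pm1\pm L(a)$ is attained at a critical point whose $\Theta_{k,h}$--preimage lies to the left of $-A$, hence outside $I=[-A,-1]\cup[1,\infty)$; consequently $E(\sigma)=L(a_\sigma)$. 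Since $\partial\Pi_k(h)$ contains the segment $[0,ia_\sigma]$, one has $a_\sigma=\max\{\theta_\sigma(\xi):\lambda<\xi<1\}$, where $\theta_\sigma(\zeta)=-i\Theta_{k(\sigma),h(\sigma)}(z(\zeta))$ and $z(\zeta)$ is the uniformization of $\bbC_+$ used in Section~4 (the annulus of Fig.~\ref{fig10} with $B=\infty$). In view of \eqref{asmain0} and $a_\sigma\to\infty$, it then suffices to prove the analogue of \eqref{12b},
\begin{equation*}
a_\sigma=\eta\sigma-G(D_\sigma,C)-\tfrac12\ln\tfrac{2\eta}{\eta_1}+\eta_2+o(1),\qquad \omega(D_\sigma)=\{\alpha\sigma+\omega(C)\},
\end{equation*}
since substituting this into $E(\sigma)=L(a_\sigma)\sim\sqrt{2/\pi}\,a_\sigma^{-1/2}e^{-a_\sigma}$ gives the theorem.

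Next I would install the integral representation. The map $\cM$ carries $\bbC_+$ onto $\{x+iy:x>-\pi\alpha,\ y>0\}$ with the single finite tooth $[0,i\eta]$ deleted --- structurally the same comb picture as the region $\Pi$ of Fig.~\ref{fig8} --- so, writing $m(\zeta)=-i\cM(z(\zeta))$ for the complex Martin function (so that $M(z(\zeta))=\Re m(\zeta)$) and repeating the manipulation of Section~4, one gets the exact analogue of \eqref{efu12},
\begin{equation*}
\theta_\sigma(\zeta)-\sigma\, m(\zeta)=\frac1\pi\int_{-\infty}^0\Bigl\{\frac1{\xi-\zeta}-\frac1{\xi-1}\Bigr\}\bigl(\chi_\sigma(\xi)-\bar\chi_\sigma\bigr)\,d\xi ,
\end{equation*}
where $\chi_\sigma$ records the boundary values of $\theta_\sigma-\sigma m$ on the negative ray and $\bar\chi_\sigma$ is fixed by the second normalization $\theta_\sigma(\lambda)=m(\lambda)=0$. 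The curves $\delta_k$ are still preimages of vertical half--lines under a conformal map of a half--plane into a sub--half--plane of Fig.~\ref{fig2}, so Lemma~\ref{lsimple} yields $|\chi_\sigma|\le2\pi$; this gives the Fuchs estimate $C_1\le a_\sigma-\sigma\eta\le C_2$ exactly as in the Proposition of Section~5, and hence $a_\sigma\to\infty$, $k(\sigma)/\sigma\to\alpha$, $a_\sigma/\sigma\to\eta$ as in Corollary~\ref{cc1}.

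The third step is to rerun Sections~6 and~7. Here the additional slit always lies on the left (only regions $\Pi^{-}_{k,\infty}(h)$ occur), so $D_\sigma\in[-\infty,-A)$ for every $\sigma$; fixing a subsequence $\sigma_l$ with $d_{\sigma_l}\to d=i\,m(-\kappa)$, the Poisson--kernel computation of the limit--density theorem of Section~6, together with the limit profile of Lemma~\ref{lemld2}, produces $\chi(\xi)=\lim_l\chi_{\sigma_l}(\xi)$ in the form \eqref{ld6}. Splitting $\chi=\chi_c+\pi j_1$ as in \eqref{chi2} and applying Lemma~\ref{ll} (whose proof uses only the local structure of the complex Green function at $C$) yields the analogue of \eqref{result},
\begin{equation*}
\lim_{l\to\infty}\bigl(\theta_{\sigma_l}(\zeta)-\sigma_l\,m(\zeta)\bigr)=\tfrac12\ln\frac{\eta-m(\zeta)}{\eta+m(\zeta)}+m(\zeta,\nu)-m(\zeta,-\kappa),
\end{equation*}
$\nu\in(\lambda,1)$ being the point with $m(\nu)=\eta$ (corresponding to $C$) and $m(\zeta,\cdot)$ the complex Green functions. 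Adding $m(\zeta,-\kappa_{\sigma_l})$, with $-\kappa_\sigma$ the point corresponding to the solution $D_\sigma$ of \eqref{12c}, makes the right side independent of the subsequence, so the limit exists along the full sequence; letting $\zeta\to\nu$ and using $G(x,C)=-\ln|x-C|+\eta_2+O(x-C)$ together with $\eta_1=-M''(C)/2$ --- the cancellation yielding \eqref{newbull} --- one obtains $\theta_\sigma(\nu)=\eta\sigma-G(D_\sigma,C)-\tfrac12\ln\tfrac{2\eta}{\eta_1}+\eta_2+o(1)$. Since the functions $m(\cdot,-\kappa_\sigma)$ are uniformly bounded with uniformly bounded derivative on $(\lambda,1)$ while $m$ has a nondegenerate maximum at $\nu$, the maximum defining $a_\sigma$ is attained within $O(1/\sigma)$ of $\nu$, so $a_\sigma=\theta_\sigma(\nu)+o(1)$, which is the displayed expansion of $a_\sigma$.

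I expect the only genuinely new point --- and the reason the paper merely sketches this case --- to be the behaviour at the infinite end: one must check that the comb--region uniformization and all the Cauchy/Herglotz representations above (for $M$, for the harmonic measure $\omega$ of $[-A,-1]$, and for the Green functions $G(\cdot,C)$, $G(\cdot,D)$ of $\bbC\setminus I$) survive when $B=\infty$. The only change in the picture is that the region $\Pi$ of Fig.~\ref{fig8} loses its right wall while keeping the left wall at $-\pi\alpha$; correspondingly $\theta_\sigma$ and $\sigma m$ acquire a common square--root singularity at $-\mu=\zeta(\infty)$ with matching leading coefficients, so that $\theta_\sigma-\sigma m$ --- equivalently $\chi_\sigma$ --- remains bounded there, which is all that the integral representation above requires. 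Granting this, the rest is a mechanical transcription of the polynomial argument.
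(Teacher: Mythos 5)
The paper states this theorem without proof --- Section~9 is explicitly ``a sketch without proof'' of the limit case $B=\infty$, the only indication of the argument being the remark that (\ref{llast}) is analogous to (\ref{12b}) --- and your proposal carries out precisely the transcription of Sections~4--7 that the authors intend: $n$ replaced by $\sigma$, the Green function with pole at infinity replaced by the Martin function, and the two-sided comb regions by the one-sided regions $\Pi^-_{k,\infty}(h)$. Your reduction to the expansion of $a_\sigma$, the integral representation, the limit-density computation, and your identification of the one genuinely new point (that $\theta_\sigma$ and $\sigma m$ have matching leading singular parts at $\zeta(\infty)$ --- now the endpoint of the gap, where both have a simple pole in the $\zeta$-variable --- because the type of the extremal function is exactly $\sigma$, so that $\chi_\sigma$ stays bounded) are all correct and consistent with the paper's intent.
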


The equation (\ref{llast}) is analogous to (\ref{12b}).
One can simplify $e^{G(D_\sigma,C)}$ as we did in Section~7
by using an expression
of the Green function in terms of theta-functions.

\bibliographystyle{amsplain}

\bigskip
 Department of mathematics,

 Purdue University,

 West Lafayette, IN 47907, USA

\smallskip
 \textit{E-mail address:}

 eremenko@math.purdue.edu
 \bigskip

Abteilung f\"ur Dynamische Systeme und Approximationstheorie,

 Johannes Kepler Universit\"at Linz,

A-4040 Linz, Austria

\smallskip
\textit{E-mail address:}

Petro.Yudytskiy@jku.at

\end{document}